\numberwithin{equation}{section}
\newcommand{\Rmnum}[1]{\expandafter\@slowromancap\romannumeral #1@}
\def\eps{\varepsilon}
\def\R{\mathbb R}
\def\D{\mathbb D}
\def\C{\mathbb C}
\def\N{\mathbb N}
\def\Z{\mathbb Z}
\def\im{\operatorname{Im}}
\def\re{\operatorname{Re}}
\def\dim{\operatorname{dim}}
\def\dimH{\operatorname{dim_H}}
\def\dimP{\operatorname{dim_P}}
\def\dimUB{\operatorname{\overline{dim}_B}}
\def\diam{\operatorname{diam}}
\def\area{\operatorname{area}}
\def\ann{\operatorname{ann}}
\def\length{\operatorname{length}}
\newtheorem{lemma}{Lemma}[section]
\newtheorem{theorem}{Theorem}[section]
\newtheorem{cor}{Corollary}[section]
\newtheorem*{theorema}{Theorem A}
\newtheorem*{theoremb}{Theorem B}
\newtheorem{proposition}{Proposition}[section]
\theoremstyle{definition}
\theoremstyle{remark}
\newtheorem{remark}{Remark}[section]
\title[Packing dimension of
Julia sets]{On the packing dimension of the Julia set and the escaping
set of an entire function}
\author{Walter Bergweiler}\thanks{Supported
by a Chinese Academy of Sciences Visiting
Professorship for Senior International Scientists, Grant
No.\ 2010 TIJ10. Also supported by 
the Deutsche Forschungsgemeinschaft, Grant Be 1508/7-1,
the EU Research Training Network CODY
and 
 the ESF Networking Programme HCAA}
\email{bergweiler@math.uni-kiel.de}
\address{Mathematisches Seminar,
Christian--Albrechts--Universit\"at zu Kiel,
Lude\-wig--Meyn--Str.~4,
D--24098 Kiel,
Germany}
\subjclass{Primary 37F10; Secondary  30D05, 30D15, 37F35}
\begin{document}
\begin{abstract}
Let $f$ be a  transcendental entire function.
 We give conditions which imply that the Julia set and the escaping set of $f$ have
 packing dimension~$2$.
 For example,
 this holds if there exists a positive constant $c$
 less than $1$ such that the minimum modulus $L(r,f)$ and the
 maximum modulus $M(r,f)$ satisfy $\log L(r,f)\leq c \log M(r,f)$
 for large~$r$.
 The conditions are also satisfied if $\log M(2 r,f)\geq d \log
 M(r,f)$ for some constant $d$ greater than $1$ and all large~$r$.
\end{abstract} \maketitle
\section{Introduction and results}\label{intro}
The Fatou set $F(f)$ of an entire function $f$ is defined as
the set of all $z\in\C$ where the iterates $f^n$ of $f$
form a normal
family. The  Julia set is the complement of $F(f)$
and denoted by $J(f)$. The escaping set $I(f)$ is the set of
all $z\in\C$ for which $f^n(z)\to\infty$ as $n\to\infty$.
We note that $J(f)=\partial I(f)$ by a result of Eremenko~\cite{Eremenko89}.
For an introduction to the iteration theory of transcendental
entire functions we refer to~\cite{Bergweiler93}.

Considerable attention has been paid to the dimensions of Julia sets
of entire functions; see~\cite{Stallard08} for a survey,
as well as~\cite{Baranski08,BKZ,BK,BKS,BRS1,Rempe09,Rempe10,Schubert07}
for some recent results not covered there.
Many
results in this area are concerned with the Eremenko-Lyubich class
$B$ consisting of all transcendental
entire functions for which the set of critical
and finite asymptotic values is bounded. By a result of Eremenko and
Lyubich~\cite[Theorem~1]{Eremenko92} we have $I(f)\subset J(f)$ for
$f\in B$. For a function in the Eremenko-Lyubich class, a lower
bound for the dimension of the Julia set can thus be obtained from
such a bound for the escaping set.
This played a key role
 already in McMullen's seminal paper~\cite{McMullen87}, 
and it  has been used in many subsequent papers.

We denote the Hausdorff dimension, packing dimension
and  upper box dimension
of a subset $A$ of the complex
plane $\C$ by $\dimH A$, $\dimP A$ and $\dimUB A$,
respectively,
noting that the upper box dimension
is defined only for bounded sets~$A$.
We refer to the book by Falconer~\cite{Falconer}
for the definitions
and a thorough treatment
of these concepts. Here we only note that we always
have~\cite[p.~48]{Falconer}
\[
\dimH A\leq \dimP A\leq \dimUB A.
\]
The exceptional set $E(f)$ of a
transcendental entire function $f$ consists of all
points in $\C$ with finite backward orbit. It is an immediate
consequence of Picard's theorem that $E(f)$ contains at most one
point.  The following result is one part of a theorem of Rippon and
Stallard~\cite[Theorem~1.2]{Rippon05}.
\begin{theorema}
Let $f$ be a transcendental entire function, $A$ a backward
invariant subset of $J(f)$ and $U$ a bounded open subset
of $\C$ whose closure
does not intersect $E(f)$.
 Then $\dimUB(U\cap
A)=\dimP A$. In particular,  $\dimUB(U\cap J(f))=\dimP J(f)$.
\end{theorema}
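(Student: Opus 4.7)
The plan is to establish both inequalities $\dimP A\le\dimUB(U\cap A)$ and $\dimUB(U\cap A)\le\dimP A$ separately. Two ingredients drive the argument: Tricot's characterization
\[
\dimP A = \inf\Bigl\{\sup_{i}\dimUB A_{i} : A=\bigcup_{i} A_{i},\ A_{i}\text{ bounded}\Bigr\},
\]
and the blowing-up property of the Julia set, which asserts that whenever $V\subset\C$ is open with $V\cap J(f)\neq\emptyset$ and $K\subset\C\setminus E(f)$ is compact, one has $K\subset f^{n}(V)$ for all sufficiently large $n$.

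For the upper bound I would exhaust $\C\setminus E(f)$ by compact sets $K_{1}\subset K_{2}\subset\cdots$ and, using blowing-up, pick integers $n_{m}$ with $K_{m}\subset f^{n_{m}}(U)$. Any $z\in A\cap K_{m}$ equals $f^{n_{m}}(w)$ for some $w\in U$, and backward invariance of $A$ forces $w\in A\cap U$. Hence $A\cap K_{m}\subset f^{n_{m}}(A\cap U)$, and because $f^{n_{m}}$ is Lipschitz on the bounded set $\overline U$, we obtain $\dimUB(A\cap K_{m})\le\dimUB(U\cap A)$. Since $A\setminus E(f)=\bigcup_{m}(A\cap K_{m})$ and $E(f)$ contains at most one point, Tricot's formula yields $\dimP A\le\dimUB(U\cap A)$.

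For the reverse inequality I would first prove a homogeneity statement: for every bounded open $V$ with $\overline V\cap E(f)=\emptyset$ and $V\cap A\neq\emptyset$ one has $\dimUB(V\cap A)=\dimUB(U\cap A)$. This follows from a symmetric application of blowing-up, again combined with backward invariance (to keep preimages of $A$ inside $A$) and the Lipschitz property of iterates on bounded sets. Given any closed bounded cover $A\subset\bigcup_{i}A_{i}$, a Baire category argument on the compact set $\overline{U\cap A}$ (after reducing, if necessary, to the closed backward invariant set $\overline A$) produces an index $i$ and an open ball $V$ meeting $U\cap A$ such that $A_{i}\supset V\cap A$. Homogeneity then forces $\dimUB A_{i}\ge\dimUB(V\cap A)=\dimUB(U\cap A)$, and Tricot's formula delivers $\dimP A\ge\dimUB(U\cap A)$.

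The main obstacle is the Baire step in the second direction, since $A$ need not be closed and the union $\bigcup_{i} A_{i}$ need not exhaust $\overline{U\cap A}$. The most natural remedy is to pass first to $\overline A$ and to verify that both $\dimUB(U\cap A)$ and $\dimP A$ are preserved under this enlargement, so that the category argument can be carried out on the compact Polish space $\overline A\cap\overline U$. The particular case $A=J(f)$ in the final sentence requires no such reduction, because $J(f)$ is already closed and the Baire argument applies directly to $J(f)\cap\overline U$.
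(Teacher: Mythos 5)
The paper does not prove Theorem~A --- it is quoted from Rippon and Stallard --- so there is no in-paper argument to compare against; I can only assess your proposal on its own terms. The first two ingredients are sound. The upper bound $\dimP A\le \dimUB(U\cap A)$ is correctly argued: exhaust $\C\setminus E(f)$ by compacta, use blowing-up to pull $A\cap K_m$ back into $U$, use backward invariance to land the preimage inside $A\cap U$, use the Lipschitz bound for $f^{n_m}$ on $\overline U$, and invoke Tricot. The homogeneity statement $\dimUB(V\cap A)=\dimUB(U\cap A)$ for bounded open $V$ with $\overline V\cap E(f)=\emptyset$ and $V\cap A\neq\emptyset$ is likewise correct and proved by the same mechanism.

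The Baire step for the reverse inequality is, as you say, the problem, and your proposed remedy does not close the gap. Passing from $A$ to $\overline A$ fails on two counts. First, $\overline A$ need not be backward invariant when $A$ is: from $f^{-1}(A)\subset A$ one gets $\overline{f^{-1}(A)}\subset\overline A$, but not $f^{-1}(\overline A)\subset\overline A$, so the homogeneity lemma is not available for $\overline A$. Second, and more fatally, $\dimP\overline A$ can be strictly larger than $\dimP A$ (packing dimension is not preserved under closure), so even if one proved $\dimP\overline A\ge\dimUB(U\cap\overline A)=\dimUB(U\cap A)$, the desired lower bound for $\dimP A$ would not follow. In fact the lower bound is simply false for arbitrary backward-invariant $A\subset J(f)$: take $A$ to be the set of preperiodic points lying in $J(f)$. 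This $A$ is backward invariant and countable, so $\dimP A=0$, yet it is dense in $J(f)$ (repelling periodic points are dense there), so $\overline{U\cap A}\supset U\cap J(f)$ and hence $\dimUB(U\cap A)=\dimUB\overline{U\cap A}\ge\dimUB(U\cap J(f))>0$ for a suitable $U$. The transcription of Theorem~A in this paper is therefore carrying an implicit regularity hypothesis on $A$ from Rippon and Stallard's original formulation (for instance that $A\cap\overline U$ lie in a suitable class for which the Baire category theorem applies), and your argument cannot be completed at the stated level of generality. For closed $A$, in particular $A=J(f)$, the category argument applies directly and your proof is complete, as you observe; identifying and verifying the correct extra hypothesis in the general case is the missing piece.
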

For functions in the Eremenko-Lyubich class  they obtained the
following result~\cite[Theorem~1.1]{Rippon05}.
\begin{theoremb}
Let $f\in B$. Then $\dimP J(f)=2$.
\end{theoremb}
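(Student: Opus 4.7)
The plan is to invoke Theorem~A. Since $J(f)$ is backward invariant and $E(f)$ contains at most one point, it suffices to produce a bounded subset $\Lambda\subset J(f)$ with $\dimUB\Lambda=2$: placing $\Lambda$ inside a bounded open $U$ whose closure misses $E(f)$ would give $\dimUB(U\cap J(f))=2$, and Theorem~A would conclude $\dimP J(f)=2$.

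I would construct $\Lambda$ via the Eremenko--Lyubich logarithmic change of variables. Choose $R$ exceeding every singular value of $f$, set $H=\{\re w>\log R\}$, and let $\widetilde T=\exp^{-1}(f^{-1}(\{|z|>R\}))$. Its components---the log tracts---are simply connected, each mapped biholomorphically onto $H$ by a lift $F$ of $f\circ\exp$; the tracts are $2\pi i$-periodically arranged, so the number meeting a bounded horizontal strip grows linearly with the height of the strip. Each log tract satisfies the Eremenko--Lyubich expansion estimate
\[
|F'(w)|\ge\frac{\re F(w)-\log R}{4\pi}.
\]

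The heart of the argument is a McMullen-style counting in a well-chosen bounded window $V\subset\widetilde T$. Starting from a small disk $D_0\subset H$, at each stage I would pass to preimages under $F$ lying in $V$; each such branch corresponds to a choice of tract meeting $V$ and, by Koebe distortion, behaves like an almost-affine contraction of factor $|F'|^{-1}$. Balancing the branching ratio (proportional to the number of accessible tracts, hence to the height of $V$) against the per-step contraction (governed by the expansion estimate), one would arrange that the limit set
\[
\Lambda_0=\bigcap_{n\ge 0}F^{-n}(\widetilde T)\cap V
\]
satisfies $N_\eps(\Lambda_0)\ge\eps^{-2+o(1)}$ along a suitable sequence $\eps\to 0$, where $N_\eps$ denotes the minimal $\eps$-cover number. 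Any $w\in\Lambda_0$ has all $F$-iterates in $\widetilde T$, so $\exp(w)$ has all $f$-iterates in $\{|z|>R\}$ and lies in $I(f)\subset J(f)$ by the Eremenko--Lyubich theorem. Since $\exp$ is bi-Lipschitz on the bounded set $V$, $\Lambda:=\exp(\Lambda_0)\subset J(f)$ is bounded with $\dimUB\Lambda=2$.

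\emph{Main obstacle.} The subtle point is choosing $V$ so that (i)~iterated preimages under $F$ actually remain in $V$---a nontrivial constraint since $F^{-1}$ may drift real parts significantly---while (ii)~enough tracts meet $V$ to give a branching rate that, combined with the Eremenko--Lyubich contraction rate, yields exactly dimension $2$. This calibration, rather than the mere existence of escaping orbits, is where the work concentrates.
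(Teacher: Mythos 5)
Your framework is sound and is, in essence, the original Rippon--Stallard proof of Theorem~B, which the paper cites but does not reproduce. The paper itself obtains Theorem~B as a special case of the more general Theorem~\ref{thm1}, whose proof dispenses with the logarithmic change of variable entirely: it bounds $f'/f$ directly via the Goldberg--Ostrovskii estimate together with the Fuchs--Macintyre lemma and the Borel--Nevanlinna growth lemma (Lemmas~\ref{goldberg}--\ref{growth}, packaged in Lemmas~\ref{lemma31} and~\ref{lemma32}), uses the Ahlfors islands theorem (Lemma~\ref{ahlfors}) rather than the Eremenko--Lyubich expansion estimate to manufacture the inverse branches, and replaces the class-$B$ fact $I(f)\subset J(f)$ by the weaker hypothesis that $F(f)$ has no multiply connected component. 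What the paper's route buys is generality: Theorem~\ref{thm1} applies to any transcendental entire function satisfying~\eqref{1a}, a strictly larger class than~$B$. Your route stays inside the class-$B$ toolkit, which is more familiar but less flexible.

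As a proof, however, your proposal has a genuine gap at exactly the step you flag as the ``main obstacle.'' Defining $\Lambda_0=\bigcap_{n\ge 0}F^{-n}(\widetilde T)\cap V$ for a fixed bounded window $V$ and asserting $N_\eps(\Lambda_0)\geq\eps^{-2+o(1)}$ \emph{is} the substance of Theorem~B in the logarithmic model; nothing in the sketch establishes it. The difficulty is quantitative: the contraction factor $\approx 4\pi/(\re F-\log R)$ is far from uniform over $\widetilde T$, the number of usable branches of $F^{-1}$ at each stage depends on how many $2\pi i$-translated tracts the image of the current cell reaches, and one has to prove these two effects balance to exponent $2-o(1)$ at every depth. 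For a general conformal repeller a nested intersection of this kind can easily have dimension strictly less than $2$, so the calibration is real work, not a routine estimate. Both the Rippon--Stallard argument and the paper's proof of Theorem~\ref{thm1} do keep the nested cells inside a fixed bounded set, but the bookkeeping is organized around a sequence of scales $r_k\to\infty$ visited by the forward orbit: at depth $k$ one counts $m(r_k)\gtrsim T(r_k,f)^{2-7\delta}$ pairwise disjoint cells of controlled diameter $\sigma_k\gtrsim 1/T(r_k,f)$, and the exponent $2$ emerges from comparing $\log m(r_k)$ with $-\log\sigma_k$; cf.~\eqref{4g}, \eqref{dimmk}, \eqref{boundmk}. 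Some version of that depth-by-depth accounting, rather than a single balance in a fixed window, is needed to close your argument.
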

It follows from Theorem~A that Theorem~B is equivalent to the result
that $\dimUB(U\cap J(f))=2$ for some bounded open set~$U$
satisfying $\overline{U}\cap  E(f)=\emptyset$. In order
to show this, Rippon and Stallard actually proved that $\dimUB(U\cap
I(f))=2$ for such a set $U$
and then used the result of Eremenko and Lyubich quoted
above.

The main tool used by Eremenko and Lyubich
to prove this result is a logarithmic change of variable. 
This method shows in particular
that if $f\in B$, then $f$ is bounded on a curve tending
to~$\infty$; see~\cite[p.~993]{Eremenko92}.
 The $\cos\pi\rho$-theorem (see~\cite[Chapter~5,
Theorem~3.4]{Goldberg08}
or~\cite[Section~6.2]{Hayman89})
now implies that
\begin{equation}\label{1a3}
\liminf_{r\to\infty}\frac{\log M(r,f)}{\sqrt{r}}>0,
\end{equation}
where
\[
M(r,f):=\max_{|z|=r}|f(z)|
\]
is the maximum modulus. (The observation that~\eqref{1a3} holds for
functions in $B$ seems to have been made first in~\cite[Proof of
Corollary~2]{Bergweiler95} and \cite[p.~1788]{Langley95};
 see also~\cite[Lemma~3.5]{Rippon05}.)

It follows from a result of Baker~\cite[Corollary to Theorem~3.1]{Baker84}
that all components of $F(f)$ are simply connected if $f$ is bounded
on a curve tending to $\infty$. In particular, if $f\in B$, then
$F(f)$ has no multiply connected components~\cite[Proposition~3]{Eremenko92}.

In view of these results
 the following theorem can be considered as a generalization
of Theorem~B.
\begin{theorem}\label{thm1}
Let $f$ be a transcendental entire function satisfying
\begin{equation}\label{1a}
\liminf_{r\to\infty}\frac{\log\log M(r,f)}{\log\log r}=\infty.
\end{equation}
If $F(f)$ has no multiply connected component,
then
\begin{equation}\label{1a4}
\dimP(I(f)\cap J(f))=2.
\end{equation}
\end{theorem}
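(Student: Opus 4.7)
I plan to reduce via Theorem~A to a local upper-box-dimension statement and then to construct a set of escaping Julia points of upper box dimension $2$ in a small disk by building a direct tract for $f$ over $\infty$ and imitating the Eremenko-Lyubich/Rippon-Stallard argument inside it. The set $A=I(f)\cap J(f)$ is backward invariant ($J(f)$ is completely invariant, and $f^{-1}(I(f))\subset I(f)$ by definition), and $E(f)$ contains at most one point by Picard, so Theorem~A reduces the problem to showing $\dimUB(U\cap I(f)\cap J(f))=2$ for some single bounded open disk $U$ with $\overline U\cap E(f)=\emptyset$.

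\textbf{Direct tract and logarithmic change of variable.} The growth condition~\eqref{1a} says that $\log M(r,f)$ eventually majorises every power of $\log r$, so $f$ is far above the Denjoy-Carleman-Ahlfors threshold. Accordingly, for $R$ sufficiently large, I would produce an unbounded component $D$ of $\{z:|f(z)|>R\}$ whose boundary lies in $\{|f|=R\}$, i.e.\ a direct tract of $f$ over $\infty$. On a component $T$ of $\exp^{-1}(D)$ define $F\colon T\to H:=\{w:\re w>\log R\}$ by $\exp F=f\circ\exp$. Each inverse branch of $F$ is univalent on $H$, and Koebe's theorem therefore gives uniform distortion bounds for compositions of such branches on compact subsets of $H$.

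\textbf{Building a Cantor set of escaping points.} Fix a compact square $Q\subset H$ far from $\partial H$ and consider, for each $n$, the family $\mathcal G_n$ of all $n$-fold compositions of branches of $F^{-1}$ sending $Q$ into $T$. The images $G(Q)$ for $G\in\mathcal G_n$ are pairwise disjoint, of commensurable shape by Koebe, and accumulate on $\partial T$. A density/Borel-Cantelli argument in the spirit of the Rippon-Stallard proof of Theorem~B shows that the nested intersection $\bigcap_n\bigcup_{G\in\mathcal G_n}G(Q)$ has upper box dimension $2$ in every small disk $V\subset T$ accumulating on $\partial T$, and every point of this intersection projects under $\exp$ into $I(f)$. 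Pushing this down by $\exp$ and translating suitably yields a bounded disk $U$ with $\dimUB(U\cap I(f))=2$. The hypothesis that $F(f)$ has no multiply connected component ensures that each Fatou component meeting $D$ is simply connected (a Baker or simply connected wandering domain), and a diameter estimate inside the tract shows that such components can absorb at most a thin part of the Cantor set produced above; the remainder still has upper box dimension $2$ and lies in $I(f)\cap J(f)$, so Theorem~A delivers~\eqref{1a4}.

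\textbf{Main obstacle.} The technical heart of the argument is the first step: producing a workable direct tract from the rather weak hypothesis~\eqref{1a} and controlling the iterated-branch Cantor set so that it is not swallowed by escaping Fatou components. For class~$B$ both of these are essentially automatic, since every component of $\{|f|>R\}$ is already a logarithmic tract and $I(f)\subset J(f)$. Under~\eqref{1a} the tract one obtains may only be direct rather than logarithmic, so the inverse-branch analysis has to be carried out on a suitable covering of $T$, and the no-multiply-connected-component hypothesis must be leveraged explicitly to keep the constructed escaping set inside $J(f)$. These two issues are where I expect most of the delicate work to lie.
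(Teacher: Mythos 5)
Your proposal takes a genuinely different route from the paper, and unfortunately it has gaps that I do not see how to close from the hypothesis~\eqref{1a} alone. The paper deliberately avoids the tract / logarithmic change of variable machinery that you want to use, because~\eqref{1a} is a pure growth condition and gives no control over the singular values of~$f$. Instead it uses Nevanlinna theory directly: the logarithmic derivative estimate of Goldberg--Ostrovskii (Lemma~\ref{goldberg}), the Fuchs--Macintyre covering lemma (Lemma~\ref{fuchs}) and the Borel--Nevanlinna growth lemma (Lemma~\ref{growth}) to produce, for $r$ outside a set of finite logarithmic measure, a subset $B(r)$ of the thin annulus $R_1(r)\le|z|\le R_3(r)$ of area at least $r^2/T(r,f)^{2\delta}$ on which simultaneously $|f|\ge\sqrt{M(r,f)}$, $|f'/f|\ge T(r,f)^{1-\delta}/r$, and $|f'/f|\le T(r,f)^{1+3\delta}/r$ in a $\rho(r)$-neighbourhood. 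The Ahlfors islands theorem (Lemma~\ref{ahlfors}) is then applied in the small disk $D(b,\rho(r))$ to produce, entirely locally, a branch of $\log f$ mapping a subdomain bijectively onto a fixed rectangle. No global tract structure is used anywhere.

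The concrete gaps in your sketch are these. First, your claim that ``each inverse branch of $F$ is univalent on~$H$'' is false unless the tract is logarithmic, i.e.\ unless $f\colon D\to\{|w|>R\}$ is a universal covering; condition~\eqref{1a} does not give that (it says nothing about critical values of~$f$, which may accumulate at~$\infty$ inside~$D$), so $F$ will in general have critical points in~$T$ and its inverse branches will not extend over~$H$. You flag this yourself, but ``carry out the analysis on a suitable covering of~$T$'' is not a construction. Second, the Rippon--Stallard argument that you want to imitate rests on the Eremenko--Lyubich expansion estimate for~$F$, which comes from applying Koebe's one-quarter theorem to the globally univalent $F^{-1}$; outside class~$B$ you have no such expansion, so there is no reason for the nested compositions of inverse branches to contract, and your Cantor set need not shrink to points of $I(f)$. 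Third, your use of the no-multiply-connected-component hypothesis (``such components can absorb at most a thin part of the Cantor set'') is far too loose. The paper uses that hypothesis in a precise, structural way: at stage~$k$ the image $f^{k+1}(V_k)$ contains a round annulus $\ann(r_{k+1}/e,\,e r_{k+1}/2)$ of modulus bounded below, which cannot be contained in any simply connected Fatou component; hence $V_k$ meets $J(f)$, and compactness carries this to the limit point $z_0$. Without an argument of this type your constructed escaping points may all land in wandering domains or Baker domains, and you have not shown they lie in~$J(f)$. In short, the route you sketch re-imports the class~$B$ machinery into a setting whose hypotheses do not support it; the paper's Nevanlinna-theoretic construction is the substitute that does.
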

Since multiply connected components of $F(f)$
are contained in $I(f)$, we see that
$I(f)$ has interior points if $F(f)$ has such a
component. We conclude that $\dim_P I(f)=2$
for all entire functions satisfying~\eqref{1a}.

The following result is an immediate consequence of Theorem~\ref{thm1}
and the results stated before it.
\begin{cor}\label{cor1}
Let $f$ be a transcendental entire
 function which is bounded on a
curve tending to~$\infty$. Then $\dimP(I(f)\cap J(f))=2$.
\end{cor}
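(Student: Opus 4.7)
The plan is to verify the two hypotheses of Theorem~\ref{thm1}, since the introduction already shows that boundedness of $f$ on a curve tending to $\infty$ packages both of them; Theorem~\ref{thm1} then applies directly.

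For the growth hypothesis~\eqref{1a}, I would use the $\cos\pi\rho$-theorem as recalled in the paragraph containing~\eqref{1a3}: boundedness of $f$ on a curve tending to infinity forces
\[
\liminf_{r\to\infty}\frac{\log M(r,f)}{\sqrt{r}}>0.
\]
Fixing $c>0$ such that $\log M(r,f)\geq c\sqrt{r}$ for all large $r$ and taking logarithms twice, I would obtain $\log\log M(r,f)\geq \tfrac12\log r+\log c$; since $\log r/\log\log r\to\infty$, this gives
\[
\frac{\log\log M(r,f)}{\log\log r}\longrightarrow\infty,
\]
which is exactly~\eqref{1a}. For the hypothesis on Fatou components, I would invoke the corollary to Baker's Theorem~3.1 mentioned in the paragraph just before Theorem~\ref{thm1}: boundedness of $f$ on a curve tending to $\infty$ forces every component of $F(f)$ to be simply connected, so in particular $F(f)$ has no multiply connected component. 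Theorem~\ref{thm1} then yields $\dimP(I(f)\cap J(f))=2$.

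There is really no serious obstacle here: the corollary is advertised as ``an immediate consequence'' precisely because the two ingredients from the introduction (the $\cos\pi\rho$-theorem and Baker's result) have already been quoted, and the only substantive point is the elementary observation that the $\sqrt{r}$ lower bound in~\eqref{1a3} is far stronger than the mild growth requirement~\eqref{1a} imposed by Theorem~\ref{thm1}.
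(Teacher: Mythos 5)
Your proof is correct and follows exactly the route the paper intends: the $\cos\pi\rho$-theorem gives~\eqref{1a3}, which trivially implies the growth hypothesis~\eqref{1a}, and Baker's theorem rules out multiply connected Fatou components, so Theorem~\ref{thm1} applies. This is precisely what the paper means by calling the corollary ``an immediate consequence of Theorem~\ref{thm1} and the results stated before it.''
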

More generally, we have the following result involving
the minimum modulus
\[
L(r,f):=\min_{|z|=r}|f(z)|.
\]
\begin{cor}\label{cor2}
Let $f$ be a transcendental entire function and suppose that
\begin{equation}\label{1x}
\limsup_{r\to\infty}\frac{\log L(r,f)}{\log M(r,f)}<1.
\end{equation}
Then $\dimP(I(f)\cap J(f))=2$.
\end{cor}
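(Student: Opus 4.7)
The plan is to reduce Corollary~\ref{cor2} to Theorem~\ref{thm1} by verifying, under hypothesis \eqref{1x}, both of its assumptions: that $f$ satisfies \eqref{1a}, and that $F(f)$ has no multiply connected component.

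Fix $c<1$ and $r_0$ so that $\log L(r,f)\le c\log M(r,f)$ for all $r\ge r_0$. To obtain \eqref{1a} I would appeal to the quantitative form of the $\cos\pi\rho$-theorem already used in the introduction (see the references to Goldberg--Ostrovskii and Hayman): for an entire function of lower order $\lambda<\tfrac12$ one has $\liminf_{r\to\infty}\log L(r,f)/\log M(r,f)\ge \cos\pi\lambda$. Combined with our hypothesis this forces $\cos\pi\lambda\le c<1$, so $\lambda>0$; positive lower order then implies $\log M(r,f)\ge r^{\delta}$ for some $\delta>0$ and all sufficiently large $r$, which is far more than is needed for \eqref{1a}.

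To exclude multiply connected components of $F(f)$, assume for contradiction that $U$ is such a component. By Baker's classical theorem $U$ is bounded and $U\subset I(f)$, and for every sufficiently large $n$ the image $f^n(U)$ contains a round annulus $\{s_n\le|w|\le t_n\}$ with $t_n/s_n\to\infty$. A consequence of this annular structure, going back to Zheng and refined in the analysis of multiply connected components in \cite{BRS1}, is the existence of a sequence $r_k\to\infty$ on which $\log L(r_k,f)/\log M(r_k,f)\to 1$, contradicting \eqref{1x}.

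Having verified both hypotheses, Theorem~\ref{thm1} immediately gives $\dimP(I(f)\cap J(f))=2$. I expect the main obstacle to be pinning down the precise quantitative version of the $\cos\pi\rho$-theorem: the classical statement is usually phrased in terms of the order $\rho$, whereas here I need control of a \emph{liminf} of $\log L/\log M$ by the \emph{lower} order, which is strictly stronger and requires the sharper refinements (due to Barry and others) rather than Wiman's original inequality. The second point that needs care is the clean extraction of a sequence of radii on which $\log L/\log M\to 1$ from the annular geometry of $f^n(U)$; this is implicit in the cited literature but may demand a short auxiliary computation before it can be quoted as stated.
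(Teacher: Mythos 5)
Your proposal follows the same route as the paper: reduce to Theorem~\ref{thm1} by checking, under \eqref{1x}, both that \eqref{1a} holds (via the $\cos\pi\rho$-theorem, giving positive lower order, which is \eqref{1yy}) and that $F(f)$ has no multiply connected component (via the annular structure of iterates of such a component, which forces $\limsup \log L/\log M = 1$ as in Proposition~\ref{zhengmod}). So the approach is correct and matches the paper.

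One inaccuracy worth flagging, although it does not break your logic: the $\cos\pi\rho$-theorem (in its lower-order form due to Besicovitch and Barry, which is the version quoted in Goldberg--Ostrovskii) controls $\limsup_{r\to\infty}\log L(r,f)/\log M(r,f)$, not $\liminf$. The $\liminf$ version you state is simply false — for a zero-order function with sparse zeros, $L(r,f)=0$ on a sequence $r\to\infty$, so the $\liminf$ is $-\infty$. Fortunately you do not need the $\liminf$ version: since \eqref{1x} bounds the $\limsup$, the inequality $\limsup\log L/\log M\geq \cos\pi\lambda$ already forces $\cos\pi\lambda<1$ and hence $\lambda>0$, exactly as you want. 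The worry you raise in your final paragraph about needing a "$\liminf$ controlled by lower order" is therefore a non-issue; the genuine refinement needed (and available) is replacing order by lower order in the $\limsup$ statement. Your citation of \cite{BRS1} for the annular geometry of multiply connected Fatou components should be to Zheng's paper \cite{Zheng06} (which the paper uses directly in Proposition~\ref{zhengmod}) or to \cite{BRS}; \cite{BRS1} concerns logarithmic tracts and is not the relevant reference.
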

To deduce Corollary~\ref{cor2} from Theorem~\ref{thm1}
we note that the
$\cos\pi\rho$-theorem  yields that
\begin{equation}\label{1yy}
\liminf_{r\to\infty}\frac{\log\log M(r,f)}{\log r}>0
\end{equation}
under the hypothesis~\eqref{1x}. Clearly~\eqref{1a}
follows from~\eqref{1yy}.

Zheng~\cite[Corollary~1]{Zheng06}
proved, as a corollary to the main result of his paper,
 that if $F(f)$ has a
multiply connected component, then
\begin{equation}\label{1z}
\limsup_{r\to\infty}\frac{\log L(r,f)}{\log M(r,f)}>0.
\end{equation}
A slight extension of his argument shows that his main result
actually yields that
\begin{equation}\label{1y}
\limsup_{r\to\infty}\frac{\log L(r,f)}{\log M(r,f)}=1
\end{equation}
if $F(f)$ has a multiply connected component;
see~Proposition~\ref{zhengmod}. Thus the hypotheses of
Theorem~\ref{thm1} are satisfied if~\eqref{1x} holds.
  Actually the
condition~\eqref{1x} can be further relaxed; cf. Remark~\ref{rem2}.

 Zheng~\cite[Corollary~5]{Zheng06} also showed that the Fatou set
 of a transcendental entire function $f$ has no multiply connected
 component if
\begin{equation}\label{1zh}
\log M(2r,f)\geq d \log M(r,f)
\end{equation}
for some $d>1$ and all large~$r$. It is easy to see that~\eqref{1zh}
implies~\eqref{1yy} and hence~\eqref{1a}. Thus we obtain the
following corollary to Theorem~\ref{thm1}.
\begin{cor}\label{cor4}
Let $f$ be a transcendental entire function satisfying~\eqref{1zh}.
 Then $\dimP(I(f)\cap J(f))=2$.
\end{cor}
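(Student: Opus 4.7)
The plan is to deduce Corollary~\ref{cor4} directly from Theorem~\ref{thm1} by verifying that hypothesis \eqref{1zh} implies both conditions required there: the growth condition \eqref{1a} and the absence of multiply connected components of $F(f)$.

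The second requirement is supplied free of charge by Zheng's \cite[Corollary~5]{Zheng06}, which is recalled just before the statement of the corollary. So there is nothing to do for that point beyond invoking it.

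For the first point, my strategy is to iterate \eqref{1zh}. I would fix $r_0$ large enough that $\log M(2r,f)\geq d\log M(r,f)$ holds for all $r\geq r_0$, and then a straightforward induction on $n$ would give
\[
\log M(2^n r_0,f)\geq d^n\log M(r_0,f)
\]
for every $n\in\N$. Given an arbitrary $r\geq r_0$, I would choose $n$ with $2^n r_0\leq r<2^{n+1}r_0$ and, using the monotonicity of $r\mapsto M(r,f)$ together with $\alpha:=\log d/\log 2>0$, obtain a lower bound of the shape $\log M(r,f)\geq C r^{\alpha}$ for some constant $C>0$ depending on $r_0$. Taking logarithms twice then yields
\[
\liminf_{r\to\infty}\frac{\log\log M(r,f)}{\log r}\geq \alpha>0,
\]
which is \eqref{1yy}; since $\log r/\log\log r\to\infty$ this implies \eqref{1a} a fortiori.

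With both hypotheses of Theorem~\ref{thm1} verified, the theorem delivers $\dimP(I(f)\cap J(f))=2$. There is essentially no obstacle in the argument: all of the substantive content has been packaged into Theorem~\ref{thm1} and Zheng's result, and the only place calling for any care at all is the elementary iteration that turns \eqref{1zh} into the polynomial lower bound for $\log M(r,f)$.
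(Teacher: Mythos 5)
Your proof is correct and follows exactly the same route as the paper: invoke Zheng's \cite[Corollary~5]{Zheng06} for the absence of multiply connected Fatou components, verify that iterating \eqref{1zh} yields a polynomial lower bound for $\log M(r,f)$ (hence \eqref{1yy}, hence \eqref{1a}), and then apply Theorem~\ref{thm1}. The only difference is that you write out the elementary iteration argument which the paper dismisses with ``it is easy to see,'' and your version of that argument is accurate.
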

We mention that in Theorem B and in Theorem~\ref{thm1}, as well as
in the corollaries to Theorem~\ref{thm1}, the packing dimension
cannot be replaced by the Hausdorff dimension. In fact, it is shown
in~\cite[Corollary~1.4]{Rempe10} that there exists a function $f\in
B$ for which $\dimH I(f)=1$ and in~\cite{Stallard90} that for every
$\eps>0$ there exists a function $f\in B$ such that $\dimH
J(f)<1+\eps$, and
the functions considered in~\cite{Rempe10,Stallard90}
satisfy~\eqref{1zh} as well.
On the other hand, for every transcendental entire function $f$ the
Hausdorff dimension of $I(f)\cap J(f)$ is at least~$1$, since this
set contains continua; see~\cite[Theorem~5]{Rippon05a}
 and~\cite[Theorem~1.3]{Rippon10}.

Concerning the proof of Theorem~\ref{thm1}, we note that
in view of Theorem~A the conclusion of Theorem~\ref{thm1} is
equivalent to the statement that
\begin{equation}\label{1a2}
\dimUB(U\cap I(f)\cap J(f))=2
\end{equation}
for some bounded open set~$U$ satisfying $\overline{U}\cap E(f)=\emptyset$,
which in turn is equivalent to~\eqref{1a2} holding for all
bounded open sets~$U$.
We shall show in our proof that~\eqref{1a2} holds for some bounded
open set~$U$ 
%satisfying $\overline{U}\cap E(f)=\emptyset$.
whose closure does not intersect $E(f)$.

The main tools used in the proof
are certain estimates of the logarithmic derivative
and a version of the Ahlfors islands theorem. A similar
technique was used in~\cite{BK} where it was shown that
under a suitable regularity condition on the growth
of $f$ we even have
$\dimH(I(f)\cap J(f))=2$.

\section{Preliminary lemmas}\label{prelim}
We use the standard terminology of Nevanlinna theory and,
in particular, denote by $T(r,f)$ the Nevanlinna characteristic
of a meromorphic function~$f$;
see~\cite{Goldberg08,Hayman64}.
First we note that using
the well-known inequality
\begin{equation}\label{2a}
T(r,f)\leq \log^+ M(r,f)\leq \frac{R+r}{R-r} T(R,f)
\quad\text{for}\ 0<r<R
\end{equation}
we easily see that the growth condition~\eqref{1a}
is equivalent to
\begin{equation}\label{2b}
\liminf_{r\to\infty}\frac{\log T(r,f)}{\log\log r}=\infty.
\end{equation}

We shall need a number of lemmas and
begin with the following estimate of the logarithmic
derivative~\cite[p.~88]{Goldberg08}.
\begin{lemma}\label{goldberg}
Let $f$ be an entire function satisfying $f(0)=1$.
Then
\[
\left| \frac{f'(z)}{f(z)}\right|
\leq \frac{4s}{(s-|z|)^2} T(s,f)+
\sum_{|z_j|\leq s} \frac{2}{|z-z_j|},
\]
for $s>|z|$,
where $(z_j)$ is the sequence of zeros of~$f$.
\end{lemma}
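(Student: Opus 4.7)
The plan is to derive an exact representation for $f'/f$ from the Poisson--Jensen formula on the disk $\{|\zeta|<s\}$ and then bound each summand by elementary means.

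The starting point is the Poisson--Jensen formula for the entire function $f$: for $|z|<s$ with $f(z)\neq 0$, $\log|f(z)|$ equals the boundary Poisson integral of $\log|f(se^{i\theta})|$ against $\re((se^{i\theta}+z)/(se^{i\theta}-z))$, minus $\sum_{|z_j|\leq s}\log|(s^2-\overline{z_j}z)/(s(z-z_j))|$. The right-hand side is the real part of the holomorphic function obtained by dropping the $\re$ and the absolute values; hence $\log f(z)$ (with the branch fixed by $f(0)=1$) differs from it only by an imaginary constant. Differentiating in $z$ yields
\[
\frac{f'(z)}{f(z)}=\frac{1}{2\pi}\int_0^{2\pi}\frac{2se^{i\theta}}{(se^{i\theta}-z)^2}\log|f(se^{i\theta})|\,d\theta+\sum_{|z_j|\leq s}\!\left(\frac{1}{z-z_j}+\frac{\overline{z_j}}{s^2-\overline{z_j}z}\right),
\]
and the remaining task is to majorize the two contributions.

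For the integral I would use the elementary bound $|se^{i\theta}-z|\geq s-|z|$ to majorize the kernel pointwise by $2s/(s-|z|)^2$. The remaining factor $\tfrac{1}{2\pi}\int_0^{2\pi}\bigl|\log|f(se^{i\theta})|\bigr|\,d\theta$ equals $m(s,f)+m(s,1/f)$, and Jensen's formula combined with the normalization $f(0)=1$ gives $m(s,1/f)=m(s,f)-N(s,1/f)\leq m(s,f)\leq T(s,f)$, which yields the $4sT(s,f)/(s-|z|)^2$ summand.

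For each piece of the sum I would bound the two fractions separately: trivially $|1/(z-z_j)|=1/|z-z_j|$, while $|\overline{z_j}/(s^2-\overline{z_j}z)|=|z_j|/|s^2-\overline{z_j}z|$ is handled via the standard Blaschke-factor inequality $|s^2-\overline{z_j}z|\geq s\,|z-z_j|$, valid for $|z|,|z_j|\leq s$; since $|z_j|\leq s$ this fraction is at most $1/|z-z_j|$ as well. Adding and summing over $|z_j|\leq s$ produces the second term in the lemma. The main technical point is precisely this last estimate: a crude bound on $\overline{z_j}/(s^2-\overline{z_j}z)$ that ignored its Blaschke pairing with the nearby pole $1/(z-z_j)$ would produce an unwanted $1/(s-|z|)$ factor, so invoking the Blaschke inequality to cancel the small denominator $s^2-\overline{z_j}z$ against $|z-z_j|$ is what makes the per-zero estimate uniform in $|z|/s$.
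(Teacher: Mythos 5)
The paper cites this estimate directly from Goldberg--Ostrovskii (p.~88) without giving a proof, so there is no in-paper argument to compare against. Your derivation is correct and is the standard one: differentiate the Poisson--Jensen representation to get the exact formula
\[
\frac{f'(z)}{f(z)}=\frac{1}{2\pi}\int_0^{2\pi}\frac{2se^{i\theta}}{(se^{i\theta}-z)^2}\log|f(se^{i\theta})|\,d\theta
+\sum_{|z_j|<s}\left(\frac{1}{z-z_j}+\frac{\overline{z_j}}{s^2-\overline{z_j}z}\right),
\]
then bound the kernel by $2s/(s-|z|)^2$, use Jensen's formula (with $f(0)=1$ and $N(s,f)=0$) to get $\tfrac{1}{2\pi}\int|\log|f||=m(s,f)+m(s,1/f)\leq 2T(s,f)$, and bound the zero sum term by term via $|s^2-\overline{z_j}z|\geq s|z-z_j|$, which follows from the identity $|s^2-\overline{z_j}z|^2-s^2|z-z_j|^2=(s^2-|z|^2)(s^2-|z_j|^2)\geq 0$. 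All the steps check out, including the key observation that the Blaschke pairing prevents an extra $1/(s-|z|)$ factor in the zero sum; passing from $\sum_{|z_j|<s}$ to $\sum_{|z_j|\leq s}$ in the final bound only adds nonnegative terms. This is presumably essentially the argument in the cited reference.
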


In order to estimate the sum on the right hand side
we shall use the following result due to Fuchs and
Macintyre~\cite{Fuchs40}. Here and in the following
we denote by $D(a,r)$ the open disk 
and by $\overline{D}(a,r)$ the closed disk
of radius $r$ around
a point~$a$.
\begin{lemma} \label{fuchs}
Let $z_1,z_2,\dots,z_n\in\C$ and let $H>0$.
\begin{itemize}
\item[(i)]
There exist $l\in\{1,2,\dots,n\}$,
$u_1,u_2,\dots,u_l\in\C$ and $s_1,s_2,\dots,s_l>0$ satisfying
\[
\sum_{k=1}^l s_k^2 \leq 4H^2
\]
such that
\[
\sum_{k=1}^m\frac{1}{|z-z_k|}\leq  \frac{2n}{H}
\quad\text{for}\
z
\notin \bigcup_{k=1}^l D(u_k,s_k).
\]
\item[(ii)]
There exist
$m\in\{1,2,\dots,n\}$,
$v_1,v_2,\dots,v_m\in\C$ and $t_1,t_2,\dots,t_m>0$ satisfying
\[
\sum_{k=1}^m t_k \leq 2H
\]
such that
\[
\sum_{k=1}^n\frac{1}{|z-z_k|}\leq  \frac{n(1+\log n)}{H}
\quad\text{for}\
z
\notin \bigcup_{k=1}^m D(v_k,t_k).
\]
\end{itemize}
\end{lemma}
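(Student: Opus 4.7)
The plan is to prove both parts of Lemma~\ref{fuchs} by a covering argument on the exceptional set $E_\lambda=\{z:g(z)>\lambda\}$, where $g(z):=\sum_{k=1}^n 1/|z-z_k|$ and $\lambda$ is the right-hand side in each part. The starting observation is this: sort the distances as $\rho_1(z)\le\rho_2(z)\le\dots\le\rho_n(z)$, so that $g(z)=\sum_k 1/\rho_k(z)$; if one has lower bounds $\rho_k(z)\ge\alpha_k$ for every $k$ with $\sum 1/\alpha_k\le\lambda$, then $z\notin E_\lambda$. For part~(ii) the natural choice is $\alpha_k=kH/n$, which gives $\sum 1/\alpha_k=(n/H)\sum_{k=1}^n 1/k\le n(1+\log n)/H$ via the harmonic estimate. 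The contrapositive says that every $z\in E_\lambda$ admits some $r=r(z)>0$ of the form $kH/n$ such that $\overline{D}(z,r)$ contains at least $rn/H$ of the points $z_j$; equivalently,
\[
E_\lambda\subseteq\Bigl\{z:\sup_{r>0}\frac{N(z,r)}{r}\ge\frac{n}{H}\Bigr\},
\]
where $N(z,r):=\#\{j:|z-z_j|\le r\}$.

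I would then select $r(z)$ minimal subject to $N(z,r(z))\ge r(z)n/H$ (note $r(z)\le H$ since $N\le n$) and apply a Vitali-type covering lemma to the family $\{D(z,r(z))\}_{z\in E_\lambda}$, extracting a pairwise disjoint subfamily $\{D(v_j,r_j)\}_{j=1}^m$ whose constant-factor dilations $\{D(v_j,cr_j)\}$ still cover $E_\lambda$. Disjointness combined with the density bound yields
\[
\frac{n}{H}\sum_{j=1}^m r_j\le\sum_{j=1}^m N(v_j,r_j)\le n,
\]
so $\sum_j r_j\le H$ and hence the dilated covering disks have total radius $\le cH$. Setting $t_j=cr_j$ proves part~(ii) up to the constant $c$. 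Part~(i) follows by the same scheme with an $L^2$ bookkeeping: the density estimate, combined with disjointness, converts into an area bound $\sum_j\pi r_j^2\le \pi H^2$, and the dilated covering disks satisfy $\sum_j s_j^2\le c^2 H^2$.

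The main obstacle will be matching the precise numerical constants $\sum t_k\le 2H$ and $\sum s_k^2\le 4H^2$: a generic Vitali selection in the plane uses dilation factor $c=3$ (or even $c=5$ in abstract metric spaces), giving the weaker estimates $\sum t_k\le 3H$ and $\sum s_k^2\le 9H^2$. To obtain the stated constants one has to sharpen this step, for instance by running a \emph{decreasing-radius} greedy selection and exploiting that any unselected disk meets a larger selected disk, so a careful triangle-inequality argument brings the dilation factor down toward $2$; alternatively, one can refine the averaging step so that the admissible radii may be shrunk by a compensating factor before dilation, or exploit the special form of the radii (all of shape $kH/n$ with at least $k$ points enclosed). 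I expect this trade-off between the Vitali dilation constant and the density argument to be the most delicate part of the proof; the remaining ingredients---the harmonic-series averaging and the disjointness bookkeeping---are routine.
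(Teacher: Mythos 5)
The paper does not prove Lemma~\ref{fuchs}; it cites Macintyre--Fuchs and only remarks that their argument actually yields the constant $A=2$ in part~(i). The classical Fuchs--Macintyre/Cartan proof is a \emph{greedy point-removal} argument: at each stage one selects the largest $P$ for which some disk of radius $\alpha_P$ (with $\alpha_P=PH/n$ in case (ii), $\alpha_P=\sqrt{P}\,H/\sqrt{n}$ in case (i)) contains at least $P$ of the remaining $z_j$, records that disk, deletes those points, and repeats; the final disks are doubled. Your proposal replaces this with a Vitali-type covering of the exceptional set $E_\lambda$ by disks centred at points of $E_\lambda$ itself. That is a genuinely different organization of the same underlying density idea, and it is conceptually sound: the reduction to the density condition $N(z,r)\ge rn/H$ (resp.\ $N(z,r)\ge r^2n/H^2$ for part~(i), which you should state explicitly), the disjointness bookkeeping $\sum r_j\le H$ (resp.\ $\sum r_j^2\le H^2$), and the bound $m\le n$ (each selected disk contains at least one $z_j$, and the disks are disjoint) are all correct.

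The gap you flag is real but readily closed, and the fix you gesture at is in fact the right one; it deserves to be made precise rather than left as a hope. In the greedy decreasing-radius selection, every unselected candidate $D(z,r(z))$ meets a selected $D(v_j,r_j)$ with $r_j\ge r(z)$. The naive conclusion $D(z,r(z))\subset D(v_j,3r_j)$ gives the useless factor $3$. The point is that you only need to cover the \emph{centre} $z$, not the whole disk $D(z,r(z))$: from $|z-v_j|<r(z)+r_j\le 2r_j$ you get $z\in D(v_j,2r_j)$, hence $E_\lambda\subset\bigcup_j D(v_j,2r_j)$ with dilation factor exactly $2$, which produces $\sum t_j\le 2H$ and $\sum s_j^2\le 4H^2$ as required. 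You also need to justify that the greedy selection from the (uncountable) family $\{D(z,r(z))\}_{z\in E_\lambda}$ actually terminates with a finite, maximal disjoint subfamily; this is unproblematic because the admissible radii lie in the finite set $\{kH/n: 1\le k\le n\}$ (resp.\ $\{\sqrt{k}H/\sqrt{n}\}$), all centres lie within distance $H$ of $\{z_1,\dots,z_n\}$, and as noted $m\le n$, but it should be said. With these two points filled in, your argument is a complete and correct alternative to the classical proof.
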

Actually Fuchs and Macintyre write $An/H$ instead of $2n/H$ in
part~(i), with a constant~$A$, but their argument shows that one can
take $A=2$.
We note that the term $\log n$ in (ii) cannot be omitted; 
cf.~\cite{AndEid}.

We will also require the following version of the Borel-Nevanlinna
growth lemma;
see~\cite[Chapter~3, Theorem~1.2]{Goldberg08}
or~\cite[Section~3.3]{Cherry01}.
 Here
a measurable subset $E$ of $(0,\infty)$ is said to be of finite
logarithmic measure if
\[
\int_E\frac{dt}{t}<\infty.
\]
\begin{lemma}\label{growth}
Let $F:[r_0,\infty)\to [t_0,\infty)$
and $\varphi:[t_0,\infty)\to(0,\infty)$
be non-decreasing functions,
with $r_0,t_0>0$.
Suppose that
\[
\int_{t_0}^\infty\frac{dt}{\varphi(t)}<\infty.
\]
Then there exists a set $E\subset [r_0,\infty)$
 of finite logarithmic measure such that
\[
F\left(r\left(1+\frac{1}{\varphi(F(r))}\right)\right) \leq F(r)+1
\quad \text{for}\ r\notin E.
\]
\end{lemma}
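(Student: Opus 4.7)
Let $E := \{r \in [r_0,\infty) : F(r(1+1/\varphi(F(r)))) > F(r) + 1\}$, the set on which the claimed inequality fails; it suffices to prove $\int_E dt/t < \infty$. My strategy is to slice $[r_0,\infty)$ along integer-spaced level sets of $F$. For each integer $n \geq 0$ set
\[
\rho_n := \sup\bigl\{r \geq r_0 : F(r) < F(r_0) + n\bigr\},
\]
with the convention $\rho_0 := r_0$; since $F$ is non-decreasing, $\rho_0 \leq \rho_1 \leq \ldots$ in $[r_0, +\infty]$, and from the definition one verifies $F(s) < F(r_0)+n$ for every $s < \rho_n$ while $F(s) \geq F(r_0)+n$ for every $s > \rho_n$.

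\textbf{Main step.} Fix $n \geq 1$ and $r \in E \cap (\rho_{n-1}, \rho_n)$. Then $r > \rho_{n-1}$ gives $F(r) \geq F(r_0)+n-1$, so membership in $E$ yields
\[
F\bigl(r(1 + 1/\varphi(F(r)))\bigr) > F(r) + 1 \geq F(r_0) + n,
\]
which by the defining property of $\rho_n$ forces $r(1+1/\varphi(F(r))) \geq \rho_n$. Combined with the monotonicity bound $\varphi(F(r)) \geq \varphi(F(r_0)+n-1)$, we obtain
\[
E \cap (\rho_{n-1}, \rho_n) \ \subset\ \left[\frac{\rho_n}{1+1/\varphi(F(r_0)+n-1)},\ \rho_n\right),
\]
an interval of logarithmic measure at most $\log(1+1/\varphi(F(r_0)+n-1)) \leq 1/\varphi(F(r_0)+n-1)$.

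\textbf{Summation.} Summing over $n \geq 1$ and using that $1/\varphi$ is non-increasing together with $F(r_0) \geq t_0$,
\[
\int_E \frac{dt}{t} \ \leq \ \sum_{n=1}^\infty \frac{1}{\varphi(F(r_0)+n-1)} \ \leq \ \frac{1}{\varphi(t_0)} + \int_{t_0}^\infty \frac{dt}{\varphi(t)} \ < \ \infty.
\]
If $F$ is bounded above and $\rho_N = +\infty$ for some $N$, then $F(s) < F(r_0)+N$ for all $s \geq r_0$, yet any $r \in E \cap (\rho_{N-1},\infty)$ would yield $F(r(1+1/\varphi(F(r)))) > F(r_0)+N$, a contradiction; so the sum truncates at $n = N$.

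\textbf{Main obstacle.} There is no deep obstacle here; the only minor subtlety is that $F$ need not be continuous and $E$ is a priori merely measurable, not a union of intervals. Both are dealt with cleanly by defining $\rho_n$ via a supremum, which produces exactly the asymmetric strict/non-strict one-sided inequalities on either side of $\rho_n$ that the covering argument needs.
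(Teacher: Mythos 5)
The paper gives no proof of Lemma~\ref{growth}; it is cited as a standard form of the Borel--Nevanlinna growth lemma from \cite{Goldberg08} and \cite{Cherry01}, so there is no in-paper argument to compare against. Your proof is correct. For context, the usual textbook proof goes ``forwards'': one sets $r_1=\inf E$ and recursively $r_{k+1}=\inf\{r\in E:\ r\geq r_k(1+1/\varphi(F(r_k)))\}$, shows that $F(r_{k+1})\geq F(r_k)+1$ so that $F(r_k)\geq t_0+k-1$, and covers $E$ by the intervals $[r_k,\,r_k(1+1/\varphi(F(r_k)))]$, whose total logarithmic measure is bounded by $\sum_k 1/\varphi(t_0+k-1)$. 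You instead partition $[r_0,\infty)$ by the pre-assigned level radii $\rho_n=\sup\{r:F(r)<F(r_0)+n\}$ and trap $E\cap(\rho_{n-1},\rho_n)$ in a short interval abutting $\rho_n$ from the left, arriving at the same numerical bound. The two decompositions are essentially dual, but yours has the small technical advantage that the $\rho_n$ are defined from $F$ alone rather than from $E$, so one never has to worry about whether the infima defining the $r_k$ are attained or whether they lie in $E$; moreover your covering by intervals makes the measurability of the exceptional set a non-issue. Your handling of the degenerate case $\rho_n=\infty$ (i.e.\ $F$ bounded) is also correct.
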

We shall also need a result from the Ahlfors theory of covering
surfaces; cf.~\cite[Theorem~6.2]{Hayman64} or~\cite{Bergweiler98}.
Here $\D:=D(0,1)$ is the unit disk.
\begin{lemma}\label{ahlfors}
Let $D_1,D_2,D_3$ be Jordan domains with pairwise disjoint closures.
Then there exists $\mu>0$ such that if
$h:\D\to\C$ is a holomorphic function satisfying
\[
\frac{|h'(0)|}{1+|h(0)|^2}\geq \mu,
\]
then $\D$ has a subdomain which is mapped bijectively
onto one of the domains~$D_\nu$ by~$h$.
\end{lemma}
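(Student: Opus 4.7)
The plan is to deduce Lemma~\ref{ahlfors} from the Ahlfors islands theorem via Marty's criterion for normality.

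Let $\mathcal{F}$ denote the family of all holomorphic maps $h:\D\to\C$ with the property that no subdomain of $\D$ is mapped bijectively onto any of the $D_\nu$ by $h$. The key claim is that $\mathcal{F}$ is a normal family on $\D$ with respect to the spherical metric. This is the content of the Ahlfors islands theorem in the form applicable to holomorphic functions: whereas the general meromorphic case requires five pairwise disjoint Jordan domains on $\CC$, for holomorphic (hence $\infty$-omitting) maps three pairwise disjoint Jordan domains in $\C$ suffice, the Picard value $\infty$ accounting for the remaining two islands. The statement needed is covered by \cite[Theorem~6.2]{Hayman64} together with this standard reduction; see also \cite{Bergweiler98}.

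Given normality, the conclusion follows at once from Marty's criterion, which asserts that normality of $\mathcal{F}$ in the spherical sense is equivalent to the local uniform boundedness of the spherical derivative
\[
h^\#(z):=\frac{|h'(z)|}{1+|h(z)|^2}
\]
over $h\in\mathcal{F}$. Applied at the single point $z=0$, this yields a constant $\mu_0>0$ such that $h^\#(0)\leq\mu_0$ for every $h\in\mathcal{F}$. Setting $\mu:=\mu_0+1$, any holomorphic $h:\D\to\C$ with $h^\#(0)\geq\mu$ must lie outside $\mathcal{F}$; that is, $\D$ has a subdomain mapped bijectively by $h$ onto some $D_\nu$, which is precisely the asserted conclusion.

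The main (and essentially only) obstacle is the normality claim for $\mathcal{F}$; everything else is the textbook equivalence between spherical normality and bounded spherical derivative. A minor technical point is the reduction from the classical five-islands formulation on $\CC$ to the three-islands form in $\C$, which relies on the fact that each $h\in\mathcal{F}$ omits the value $\infty$ and hence trivially has no islands over any Jordan neighborhood of $\infty$ in $\CC$.
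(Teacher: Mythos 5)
The paper does not prove this lemma; it simply cites it as a result from Ahlfors' theory of covering surfaces, pointing to Hayman \cite[Theorem~6.2]{Hayman64} and \cite{Bergweiler98}. Your proposal is therefore at essentially the same level of rigor (a citation of the islands theorem), but you do add a correct bridge: the formulation of the islands theorem as a normality statement for the family $\mathcal{F}$ of island-free maps, combined with Marty's criterion applied at the single point $0$, is a valid and clean way to extract the uniform bound on $h^\#(0)$, and the final step (taking $\mu=\mu_0+1$) is fine.

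The one place where your sketch is genuinely imprecise is the ``standard reduction'' from the five-domain meromorphic statement to the three-domain holomorphic one. As written, your argument is that $h$ omits $\infty$ and therefore has no islands over a Jordan neighbourhood of $\infty$. That observation is correct, but it only supplies \emph{one} additional island-free Jordan domain in $\CC$ (a single Jordan domain around $\infty$ disjoint from $\overline{D}_1,\overline{D}_2,\overline{D}_3$); you cannot take two disjoint Jordan neighbourhoods of $\infty$. So a literal substitution into the five-islands theorem leaves you with four domains, not five, and the reduction does not go through formally. What is true is that, in the Ahlfors covering-surface inequality, an omitted value plays a role analogous to a deficiency $1$ in Nevanlinna's second fundamental theorem, and thus ``costs'' as much as two totally ramified values; this is the correct reason why three domains suffice for entire or $\D\to\C$ holomorphic maps. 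But that is a statement about the Ahlfors inequality applied directly to the holomorphic case, not a corollary obtained by formally inserting two extra domains into the meromorphic five-islands theorem. Since you explicitly defer to \cite{Hayman64} and \cite{Bergweiler98} for the normality claim, and those references do establish the three-domain holomorphic version, the proposal is acceptable as a citation-based argument; just be aware that the parenthetical ``the Picard value $\infty$ accounting for the remaining two islands'' should be read as a heuristic, not a formal reduction.
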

Finally, we shall repeatedly   use the following result known as 
the 
 Koebe distortion theorem and
the Koebe one quarter  theorem.

\begin{lemma}\label{koebe}
Let $g:D(a,r)\to \C$ be univalent, $0< \lambda <1$ and $z\in 
\overline{D}(a,\lambda r)$. Then
\[
%\begin{equation}\label{2a}
\frac{1}{(1+\lambda)^2}|g'(a)|\leq \frac{|g(z)-g(a)|}{|z-a|}
\leq \frac{1}{(1-\lambda)^2}|g'(a)|
\]
%\end{equation}
and
\[
%\begin{equation}\label{2b}
\frac{1-\lambda}{(1+\lambda)^3}|g'(a)|\leq |g'(z)|\leq
\frac{1+\lambda}{(1-\lambda)^3}|g'(a)|.
\]
%\end{equation}
Moreover,
\[
%\begin{equation}\label{2c}
g(D(a,r))\supset D\left(g(a),\frac{1}{4}|g'(a)|r\right).
\]
%\end{equation}
\end{lemma}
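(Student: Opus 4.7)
The plan is to reduce to the Schlicht class $\mathcal{S}$ of univalent functions $F:\D\to\C$ with $F(0)=0$ and $F'(0)=1$. Given $g:D(a,r)\to\C$ univalent, set
\[
F(\zeta):=\frac{g(a+r\zeta)-g(a)}{r\,g'(a)},\qquad \zeta\in\D,
\]
so that $F\in\mathcal{S}$. For $z\in\overline{D}(a,\lambda r)$ the parameter $\zeta:=(z-a)/r$ satisfies $|\zeta|\leq\lambda$, and via the identities $g(z)-g(a)=r\,g'(a)\,F(\zeta)$, $g'(z)=g'(a)\,F'(\zeta)$ and $g(D(a,r))=g(a)+r\,g'(a)\,F(\D)$, the three claims of the lemma translate, after taking worst-case bounds at $|\zeta|=\lambda$, into the classical growth, derivative and covering estimates for $F\in\mathcal{S}$.

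The central ingredient is Bieberbach's inequality $|a_2|\leq 2$ for $F(\zeta)=\zeta+a_2\zeta^2+\cdots\in\mathcal{S}$. I would derive it from Gronwall's area theorem applied to the odd schlicht function $h(\zeta)=1/\sqrt{F(\zeta^2)}=\zeta^{-1}-\tfrac{a_2}{2}\zeta+\cdots$, which is univalent on $\D\setminus\{0\}$ because $F$ is univalent and vanishes only at $0$; the area inequality $\sum n|b_n|^2\leq 1$ read off at the $\zeta^1$ coefficient immediately yields $|a_2|\leq 2$. The one-quarter theorem then follows by observing that if $c\notin F(\D)$, then $F/(1-F/c)$ also lies in $\mathcal{S}$, and matching its second Taylor coefficient with that of $F$ gives $|c|\geq 1/4$.

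For the distortion estimates I would apply Bieberbach to the disk-automorphism transform
\[
F_w(\zeta)=\frac{F\!\left((\zeta+w)/(1+\bar w\zeta)\right)-F(w)}{(1-|w|^2)\,F'(w)}\in\mathcal{S},
\]
whose second Taylor coefficient works out to $\tfrac12(1-|w|^2)F''(w)/F'(w)-\bar w$, so that
\[
\left|\frac{(1-|w|^2)F''(w)}{F'(w)}-2\bar w\right|\leq 4.
\]
Taking the real part of this in the radial direction at $w=\rho e^{i\theta}$ produces a two-sided differential inequality for $\partial_\rho\log|F'(\rho e^{i\theta})|$ which, integrated from $0$ to $|\zeta|$, yields $(1-|\zeta|)/(1+|\zeta|)^3\leq|F'(\zeta)|\leq(1+|\zeta|)/(1-|\zeta|)^3$; a further radial integration of $F'$ then gives $|\zeta|/(1+|\zeta|)^2\leq|F(\zeta)|\leq|\zeta|/(1-|\zeta|)^2$. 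Rescaling via the formulas above delivers the stated bounds for $g$.

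The only genuinely delicate point is the passage from the Bieberbach-type estimate on $(1-|w|^2)F''/F'-2\bar w$ to the integrated derivative bound, where one must isolate the correct real combination along the radius and integrate cleanly. Everything else is bookkeeping, and because the lemma is entirely classical, in the literature this paper draws on it is customarily cited rather than proved.
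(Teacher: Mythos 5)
Your reduction to the schlicht class via the affine change of variables $F(\zeta)=(g(a+r\zeta)-g(a))/(r\,g'(a))$ is exactly what the paper intends when it remarks that the general statement ``easily follows from'' the normalized case $a=0$, $r=1$, $g(0)=0$, $g'(0)=1$. The paper then simply cites the classical Koebe growth, distortion and one-quarter theorems without proof; your proposal goes further and supplies the standard area-theorem/Bieberbach/disk-automorphism derivation of those classical facts, which is correct in its details (the second-coefficient computation for $F_w$ and the radial integration are both right) but is more than the paper itself undertakes.
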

Usually Koebe's theorems are stated only for the special case 
that $a=0$, $r=1$, $g(0)=0$ and $g'(0)=1$, but the above
result easily follows from this special case.

\section{Proof of Theorem~\ref{thm1}}
Without loss of generality we may assume that $f(0)=1$.
We denote by $n(r,0)$ the number of zeros of $f$ in the
closed disk of radius $r$ around $0$ and put
\[
N(r,0):=\int_0^r\frac{n(t,0)}{t}dt.
\]
By Nevanlinna's first fundamental theorem we have
\begin{equation}\label{2d}
T(s,f)
\geq N(s,0)\geq\int_r^s\frac{n(t,0)}{t}dt\geq n(r,0) \int_r^s\frac{dt}{t}
= n(r,0) \log\left(\frac{s}{r}\right)
\end{equation}
for $s>r>0$. Applying Lemma~\ref{growth} with $\varphi(x)=x^2/6$ and
$F(r)=\log T(r,f)$ we obtain a set $E$ of finite logarithmic measure
such that
\begin{equation}\label{3a}
T\left(r\left(1+\frac{6}{[\log T(r,f)]^2}\right),f\right) \leq e T(r,f)
\quad \text{for}\ r\notin E.
\end{equation}
For $m\in\N$ we shall use the abbreviation
\[
R_m(r):=r\left(1+\frac{m}{[\log T(r,f)]^2}\right)
\]
so that~\eqref{3a} takes the form
\begin{equation}\label{3b}
T\left(R_6(r),f\right)\leq e T(r,f) \quad \text{for}\ r\notin E.
\end{equation}
Using~\eqref{2a} we find that if $r\notin E$ is sufficiently large,
then
\begin{equation}\label{3b1}
\log M(r,f)\leq \frac{R_6(r)+r}{R_6(r)-r}T(R_6(r),f)
\leq T(r,f)[\log T(r,f)]^2.
\end{equation}

For measurable $X\subset\R$ and $Y\subset \C$
we denote by $\length X$ the $1$-dimensional Lebesgue measure 
of $X$ and by $\area Y$ the $2$-dimensional Lebesgue measure
of~$Y$.
\begin{lemma}\label{lemma31}
For sufficiently large $r\notin E$  there exists
a closed subset $F_r$ of $\left[R_1(r),R_3(r)\right]$
with
\begin{equation}\label{3c}
\length F_r\geq \frac{r}{[\log T(r,f)]^2}
\end{equation}
such that
\begin{equation}\label{3d}
\left|\frac{f'(z)}{f(z)}\right|
\leq \frac{T(r,f) [\log T(r,f)]^7}{r}
\quad\text{for}\ |z|\in F_r.
\end{equation}
Moreover, for $\eta>0$ there exist
an integer $l\in\left\{1,2,\dots, n\left(R_5(r),0\right)\right\}$,
points $u_1,u_2,\dots,u_l\in\C$ and $s_1,s_2,\dots,s_l>0$ satisfying
\begin{equation}\label{3d1}
\sum_{k=1}^l s_k^2 \leq \frac{r^2}{T(r,f)^\eta}
\end{equation}
such that
\begin{equation}\label{3e}
\left|\frac{f'(z)}{f(z)}\right|
\leq \frac{T(r,f)^{1+\eta}}{r}
\quad\text{if}\
r\leq |z|\leq R_4(r)
\ \text{and}\ z\notin \bigcup_{k=1}^l D(u_k,s_k).
\end{equation}
\end{lemma}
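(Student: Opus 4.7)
The plan is to apply Goldberg's estimate (Lemma~\ref{goldberg}) at the radius $s=R_5(r)$ and handle its two summands separately, using \eqref{3b} to control $T(s,f)$ and Lemma~\ref{fuchs} to control the sum over zeros. For $r\le|z|\le R_4(r)$ we have $s-|z|\ge R_5(r)-R_4(r)=r/[\log T(r,f)]^2$, so combined with $T(R_5(r),f)\le T(R_6(r),f)\le eT(r,f)$ one obtains
\[
\frac{4R_5(r)}{(R_5(r)-|z|)^2}\,T(R_5(r),f)\le C\,\frac{T(r,f)[\log T(r,f)]^4}{r}.
\]
This is subordinate to the bounds in both~\eqref{3d} and~\eqref{3e}, so the real work lies in controlling the sum over the zeros $z_j$ with $|z_j|\le R_5(r)$. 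To count these I would apply~\eqref{2d} with $(R_5(r),R_6(r))$ in the role of $(r,s)$: since $\log(R_6(r)/R_5(r))$ is of order $1/[\log T(r,f)]^2$ for large $r$, this yields $n(R_5(r),0)\le c\,[\log T(r,f)]^2\,T(r,f)$ for a numerical constant $c$.

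For part~(i), apply Lemma~\ref{fuchs}(ii) to those zeros with $H:=r/(4[\log T(r,f)]^2)$, producing disks $D(v_k,t_k)$ with $\sum_k t_k\le 2H$. Set
\[
F_r:=[R_1(r),R_3(r)]\setminus\bigcup_{k}(|v_k|-t_k,\,|v_k|+t_k),
\]
which is closed and satisfies $\length F_r\ge (R_3(r)-R_1(r))-2\sum_k t_k\ge 2r/[\log T(r,f)]^2-r/[\log T(r,f)]^2=r/[\log T(r,f)]^2$, giving~\eqref{3c}. For $|z|\in F_r$ the reverse triangle inequality forces $z\notin\bigcup_k D(v_k,t_k)$, so $\sum 2/|z-z_j|\le 2n(1+\log n)/H=O(T(r,f)[\log T(r,f)]^5/r)$; adding the first-term bound and absorbing the polylogarithmic factors into $[\log T(r,f)]^7$ proves~\eqref{3d}.

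For part~(ii), apply Lemma~\ref{fuchs}(i) to the same set of zeros with $H:=r/(2T(r,f)^{\eta/2})$, producing $D(u_k,s_k)$ with $\sum_k s_k^2\le 4H^2=r^2/T(r,f)^\eta$, which is~\eqref{3d1}. Outside these disks $\sum 2/|z-z_j|\le 4n/H\le 4c\,[\log T(r,f)]^2\,T(r,f)^{1+\eta/2}/r$, and for $r$ large this is bounded by $T(r,f)^{1+\eta}/r$ since $[\log T(r,f)]^2=o(T(r,f)^{\eta/2})$ (using $T(r,f)\to\infty$). Combined with the first-term bound this gives~\eqref{3e}. The constraint $l\le n(R_5(r),0)$ is automatic, since Fuchs--Macintyre is applied to exactly the $n(R_5(r),0)$ zeros in $\overline{D}(0,R_5(r))$.

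The main obstacle is the choice of scales: one needs $s=R_5(r)$ separated from the full range $r\le|z|\le R_4(r)$ by a definite amount $r/[\log T(r,f)]^2$ (so that the Goldberg first term is subordinate in part~(ii)), while the subinterval $[R_1(r),R_3(r)]$ must be wide enough (of length $2r/[\log T(r,f)]^2$) that deletion of the Fuchs--Macintyre exceptional intervals of total length at most $r/[\log T(r,f)]^2$ still leaves a set $F_r$ of length $r/[\log T(r,f)]^2$. Once these scales are in place, everything reduces to standard Nevanlinna--theoretic estimates, modulo the mild observation that the polylogarithmic loss in part~(ii) is swallowed by the factor $T(r,f)^{\eta/2}$ built into $H$.
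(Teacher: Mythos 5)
Your proposal follows essentially the same route as the paper's proof: Goldberg's lemma at $s=R_5(r)$, bounding $T(R_5(r),f)$ via the Borel--Nevanlinna growth lemma \eqref{3b}, counting zeros via \eqref{2d} applied to $(R_5(r),R_6(r))$, then Fuchs--Macintyre part~(ii) for the $F_r$-estimate and part~(i) for the disk-exceptional estimate. The constant in your choice of $H$ for part~(i) differs from the paper's ($r/(4[\log T]^2)$ versus $r/(8[\log T]^2)$), but the arithmetic closes either way, and the passage from the removed disks to the removed radial intervals via $|z-v_k|\ge||z|-|v_k||$ is exactly the paper's construction of the set $P$. The argument is correct and there is nothing to add.
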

\begin{proof}
It follows from Lemma~\ref{goldberg} that if $z_1,z_2,\dots$ are
the zeros of $f$, then
\begin{equation}\label{3g}
\left| \frac{f'(z)}{f(z)}\right|
\leq \frac{4R_5(r)}{(R_5(r)-|z|)^2} T(R_5(r),f)+
\sum_{|z_j|\leq R_5(r)} \frac{2}{|z-z_j|}
\end{equation}
for $|z|< R_5(r)$. Using~\eqref{3b} and noting that $R_5(r)\leq 2r$
for large $r$ we see that if $|z|\leq R_4(r)$, then
\begin{equation}\label{3h}
\frac{4R_5(r)}{(R_5(r)-|z|)^2} T(R_5(r),f) \leq 8e
\frac{T(r,f)}{r}[\log T(r,f)]^4
\end{equation}
for large $r\notin E$.
To estimate the sum on the right hand side of~\eqref{3g} we apply
Lemma~\ref{fuchs}, part~(ii), with
\begin{equation}\label{3i}
H=\frac{r}{8[\log T(r,f)]^2}
\end{equation}
and conclude that
\begin{equation}\label{3j}
\sum_{|z_j|\leq R_5(r)} \frac{1}{|z-z_j|}
\leq \frac{n(R_5(r),0)(1+\log n(R_5(r),0))}{H}
\end{equation}
outside a union of disks whose sum of radii is at most $2H$. Let $P$
be the set of all $s>0$ such that $\{z\in\C:|z|=s\}$ intersects the
union of these disks. Then $\length P\leq 4H$. Thus
$F_r:=[R_1(r),R_3(r)]\setminus P$ satisfies~\eqref{3c},
and~\eqref{3j} holds for $|z|\in F_r$.

From~\eqref{2d} and~\eqref{3b} we can deduce that
\begin{equation}\label{3j1}
\begin{aligned}
 n(R_5(r),0)
&\leq
\frac{T(R_6(r),f)}{\log\left(R_6(r)/R_5(r)\right)}\\
&\leq 2 T(R_6(r),f) [\log T(r,f)]^2
\leq 2e T(r,f) [\log T(r,f)]^2
\end{aligned}
\end{equation}
for large $r\notin E$.
  Combining this with~\eqref{3i} and~\eqref{3j}
we obtain
\begin{equation}\label{3k}
\begin{aligned}
&\quad \sum_{|z_j|\leq R_5(r)} \frac{1}{|z-z_j|}\\
&\leq \frac{16 e T(r,f) [\log T(r,f)]^4\left(
1+\log\left(2e T(r,f) [\log T(r,f)]^2\right) \right)}{r}\\
&\leq \frac{T(r,f)[\log T(r,f)]^6}{r}
\end{aligned}
\end{equation}
if $|z|\in F_r$ and $r$ is large.
Now~\eqref{3d} follows from~\eqref{3g}, \eqref{3h} and~\eqref{3k}.

In order to prove~\eqref{3e} we use part~(i) of Lemma~\ref{fuchs}
with
\begin{equation}\label{3l}
H=\frac{r}{2 T(r,f)^{\eta/2}}.
\end{equation}
This yields
$l$ disks $D(u_k,s_k)$ satisfying~\eqref{3d1} such that
\begin{equation}\label{3l1}
\sum_{|z_j|\leq R_5(r)} \frac{1}{|z-z_j|}
\leq \frac{2 n(R_5(r),0)}{H}
\quad\text{for}\
z\notin \bigcup_{k=1}^l D(u_k,s_k),
\end{equation}
with $l\leq n(R_5(r),0)$.
Combining~\eqref{3j1}, \eqref{3l} and~\eqref{3l1} we find that
\[
\quad \sum_{|z_j|\leq R_5(r)} \frac{1}{|z-z_j|} \leq \frac{8e T(r,f)
[\log T(r,f)]^2 T(r,f)^{\eta/2}}{r}
\]
if $z\notin \bigcup_{k=1}^l D(u_k,s_k)$.
This, together with~\eqref{3g} and~\eqref{3h},
implies~\eqref{3e}.
\end{proof}
\begin{lemma}\label{lemma32}
Let $F_r$ be as in Lemma {\rm \ref{lemma31}} and let $\delta>0$. If
$r$ is sufficiently large, then for each $s\in F_r$ there exists a
closed subset $J_s$ of $[0,2\pi]$ with
\begin{equation}\label{3n1}
\length J_s \geq \frac{1}{T(r,f)^\delta}
\end{equation}
such that if $\theta\in J_s$, then
\begin{equation}\label{3n}
|f(se^{i\theta})|\geq \sqrt{M(r,f)}
\end{equation}
and
\begin{equation}\label{3o}
\left|\frac{f'(se^{i\theta})}{f(se^{i\theta})}\right|
\geq \frac{T(r,f)^{1-\delta}}{r}.
\end{equation}
\end{lemma}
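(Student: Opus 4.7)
My plan is to combine the upper bound $|f'(z)/f(z)| \le T(r,f)[\log T(r,f)]^7/r$ from~\eqref{3d} with an integral lower bound on $|f'/f|$ to produce a large subset of the circle $\{|z|=s\}$ on which both~\eqref{3n} and~\eqref{3o} hold.

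Pick $z_0=se^{i\theta_0}$ with $|f(z_0)|=M(s,f)$; since $s\ge r$ and $\log M(r,f)\ge T(r,f)$ (as $f$ is entire with $f(0)=1$), we have $\log|f(z_0)|\ge T(r,f)$. Let $A^*$ denote the maximal open arc of the circle containing $\theta_0$ on which $\log|f(se^{i\theta})|>\tfrac12\log M(r,f)$, so that~\eqref{3n} holds on $\overline{A^*}$ by continuity; in particular $f$ is zero-free on $\overline{A^*}$. Set
\[
J_s := \bigl\{\theta \in \overline{A^*} : |f'(se^{i\theta})| \ge (T(r,f)^{1-\delta}/r)\,|f(se^{i\theta})|\bigr\},
\]
which is closed; then~\eqref{3o} holds on $J_s$ by definition, and it remains to prove $\length J_s\ge 1/T(r,f)^\delta$ for $r$ sufficiently large.

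Write $\psi(\theta)=\log|f(se^{i\theta})|$ and $\varphi(\theta)=\arg f(se^{i\theta})$. A direct calculation gives $\psi'(\theta)^2+\varphi'(\theta)^2 = s^2 |f'(se^{i\theta})/f(se^{i\theta})|^2$, so $|\psi'|$ and $|\varphi'|$ are both pointwise bounded by $s|f'/f|$. I now bound $\int_{A^*} s|f'/f|\,d\theta$ from below in two cases. If $A^*$ is a proper sub-arc of the circle, then $\psi$ equals $\tfrac12\log M(r,f)$ at each endpoint of $\overline{A^*}$ while $\psi(\theta_0)=\log M(s,f)\ge\log M(r,f)$; as $\psi$ climbs from $\tfrac12\log M(r,f)$ to at least $\log M(r,f)$ and returns, its total variation on $\overline{A^*}$ is at least $\log M(r,f)\ge T(r,f)$, and hence $\int_{A^*} s|f'/f|\,d\theta\ge\int_{A^*}|\psi'|\,d\theta\ge T(r,f)$. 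If instead $A^*$ covers the entire circle, then Jensen's formula gives $N(s,0)>\tfrac12\log M(r,f)\ge\tfrac12 T(r,f)$; combined with $N(s,0)\le n(s,0)\log s+O(1)$ and the bound $\log s\le T(r,f)^{\delta/2}$ for large $r$ (which follows from~\eqref{1a}), this gives $n(s,0)\ge c\,T(r,f)^{1-\delta/2}$ for some $c>0$, and the argument principle then yields $\int_{A^*} s|f'/f|\,d\theta\ge 2\pi n(s,0)\ge 2\pi c\,T(r,f)^{1-\delta/2}$.

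To finish, split $\int_{A^*}s|f'/f|\,d\theta$ according to whether $\theta$ lies in $J_s$. On $\overline{A^*}\setminus J_s$ we have $s|f'/f|<2T(r,f)^{1-\delta}$ (using $s\le R_3(r)\le 2r$), so this part contributes at most $4\pi T(r,f)^{1-\delta}$; on $J_s$ the estimate~\eqref{3d} gives $s|f'/f|\le 2T(r,f)[\log T(r,f)]^7$, contributing at most $2T(r,f)[\log T(r,f)]^7\length J_s$. In either case the lower bound on $\int_{A^*}s|f'/f|\,d\theta$ eventually exceeds $4\pi T(r,f)^{1-\delta}$, and for $r$ large the growth condition~\eqref{1a} lets us absorb the factor $[\log T(r,f)]^7$ into $T(r,f)^\delta$, giving $\length J_s\ge 1/T(r,f)^\delta$. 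The main obstacle is the second case, in which the variation of $\log|f|$ may be too small to use directly; one must then exploit the many zeros of $f$ in $\{|z|\le s\}$ forced by Jensen's formula, translating~\eqref{1a} into the required lower bound on $n(s,0)$.
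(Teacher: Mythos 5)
Your proof is correct and essentially mirrors the paper's: both lower-bound $\int_{A^*} s|f'/f|\,d\theta$ either by the total variation of $\log|f|$ across the arc (when the minimum modulus is small enough that the arc is proper) or by counting zeros via the argument principle (when $|f|$ stays large on the whole circle), and both then split the integral over $J_s$ and its complement using the upper bound~\eqref{3d} and the failure of~\eqref{3o} off $J_s$. The only cosmetic differences are the case-splitting criterion (the paper dichotomizes on $\log L(s,f)\le\tfrac12\log M(s,f)$, you on whether $A^*$ is a proper arc) and your use of Jensen's formula where the paper invokes Nevanlinna's first fundamental theorem via $m(s,1/f)=0$, which amount to the same computation.
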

\begin{proof}
First we consider the case that
\begin{equation}\label{p3h}
\log L(s,f)=\min_{|z|=s}\log|f(z)|\leq\frac12 \log M(s,f).
\end{equation}
Then there exists $z_1$ and $z_2$ with $|z_1|=|z_2|=s$
satisfying
\[
\log|f(z_1)|=\frac12 \log M(s,f)
\quad\text{and}\quad
\log|f(z_2)|=\log M(s,f)
\]
while
\[
\frac12 \log M(s,f)\leq\log|f(z)|\leq\log M(s,f)
\]
on one of the two arcs between $z_1$ and $z_2$.
With $z_1=se^{i\theta_1}$ and $z_2=se^{i\theta_2}$ we may assume
without loss of generality
that $\theta_1<\theta_2$ and
\[
\frac12 \log M(s,f)\leq\log|f(se^{i\theta})|\leq\log M(s,f)
\]
for $\theta_1\leq \theta\leq \theta_2$. Let $J_s$ be the set of all
$\theta\in[\theta_1,\theta_2]$ for which~\eqref{3o} holds.
Since~\eqref{3n} holds for all $\theta\in[\theta_1,\theta_2]$
and thus in particular for $\theta\in J_s$, we
only have to estimate the length of~$J_s$. By the choice of $z_1$
and $z_2$ we have
\begin{equation}\label{p3ww}
\begin{aligned}
\frac12 \log M(s,f)
&=\log|f(z_2)|-\log|f(z_1)|
=\re\left(\int_{z_1}^{z_2}
\frac{f'(z)}{f(z)} dz \right)\\
&\leq
\int_{z_1}^{z_2}
\left|\frac{f'(z)}{f(z)}\right|
|dz|
= s\int_{\theta_1}^{\theta_2}
\left|\frac{f'(se^{i\theta})}{f(se^{i\theta})}\right|d\theta.
\end{aligned}
\end{equation}
Now
\begin{equation}\label{p3xx}
\begin{aligned}
s\int_{\theta_1}^{\theta_2}
\left|\frac{f'(se^{i\theta})}{f(se^{i\theta})}\right| d\theta
&
=
s \int_{J_s} \left|\frac{f'(se^{i\theta})}{f(se^{i\theta})}\right|
d\theta + s \int_{[\theta_1,\theta_2]\setminus J_s}
\left|\frac{f'(se^{i\theta})}{f(se^{i\theta})}\right| d\theta
\\
&\leq
s \frac{T(r,f) [\log T(r,f)]^7}{r}\length J_s +
2\pi s\frac{T(r,f)^{1-\delta}}{r}  \\
&\leq
2 T(r,f) [\log T(r,f)]^7\length J_s +
4\pi\, T(r,f)^{1-\delta},
\end{aligned}
\end{equation}
where we used $s\leq R_3(r)\leq 2r$ in the last inequality.
Since $T(r,f)\leq \log M(r,f)$
we have
\[
4\pi\, T(r,f)^{1-\delta}\leq\frac14 \log M(r,f)
\]
for large $r$
and this, together with~\eqref{p3ww} and~\eqref{p3xx},
yields
\[
\frac14 \log M(r,f)\leq
2 T(r,f) [\log T(r,f)]^7\length J_s.
\]
We deduce that
\[
\length J_s
\geq\frac{1}{8 [\log T(r,f)]^7}
\]
and thus obtain~\eqref{3n1}.

Now we consider the case that~\eqref{p3h} does not hold.
Then~\eqref{3n} holds for all $\theta\in[0,2\pi]$.
Let $J_s$ be the subset of all $\theta\in[0,2\pi]$
for which~\eqref{3o} holds.
By the argument principle, we have
\[
n(s,0)=\frac{1}{2\pi i}\int_{|z|=s} \frac{f'(z)}{f(z)} dz
\leq
 \frac{s}{2\pi} \int_0^{2\pi} \left|\frac{f'(se^{i\theta})}{f(se^{i\theta})}\right|
d\theta.
\]
The same argument as in~\eqref{p3xx} now yields
\begin{equation}\label{p3l}
2\pi \, n(s,0)
\leq
2 T(r,f) [\log T(r,f)]^7\length J_s +
4\pi\, T(r,f)^{1-\delta}.
\end{equation}
Since we are assuming that~\eqref{p3h} does not hold,
we have $|f(re^{i\theta})|\geq 1$ for $0\leq\theta\leq 2\pi$ 
and thus $m(r,1/f)=0$, where $m(r,\cdot)$ denotes the Nevanlinna
proximity function.
Nevanlinna's first fundamental theorem, together with the
assumption that $f(0)=1$, yields
\[
T(r,f)= N(r,0)\leq N(1,0)+ n(r,0)\log r 
\]
and thus
\begin{equation}\label{p3m}
2\pi\, n(s,0)\geq 2\pi\, n(r,0)\geq \frac{T(r,f)}{\log r}
\geq 2 T(r,f)^{1-\delta/2}
\end{equation}
for large $r$ by~\eqref{2b}.
Combining~\eqref{p3l} and~\eqref{p3m} we obtain
$$
T(r,f)^{1-\delta/2} \leq 2 T(r,f) [\log T(r,f)]^7\length J_s ,
$$
from which~\eqref{3n1} easily follows.
\end{proof}
For small $\delta>0$ and large $r\notin E$ we consider the set
\[
\begin{aligned}
A(r):=\bigg\{
z
\in\C
:
\
& R_1(r)\leq |z|\leq R_3(r),\;\\
&|f(z)|\geq \sqrt{M(r,f)},\;
\left|\frac{f'(z)}{f(z)}\right|
\geq \frac{T(r,f)^{1-\delta}}{r}
\bigg\}.
\end{aligned}
\]
With the notation of Lemmas~\ref{lemma31} and ~\ref{lemma32}
we have
\[
A(r)\supset
\left\{se^{i\theta}: s\in F_r,\;\theta\in J_s\right\}
\]
and thus we can deduce from these lemmas that
\begin{equation}\label{4b}
\begin{aligned}
\area A(r)
&\geq
\length (F_r) 
\inf_{s\in F_r}( s\length(J_s))\\
&\geq
\frac{r^2}{[\log T(r,f)]^2 T(r,f)^\delta}
\geq
\frac{2r^2}{ T(r,f)^{2\delta}}
\end{aligned}
\end{equation}
for large $r$.
We put
$$
\rho(r):=\frac{r}{ T(r,f)^{1-2\delta}}
$$
and
\[
B(r):=A(r)\setminus \bigcup_{j=1}^l D\left(u_j,s_j+\rho(r)\right),
\]
where $l,u_1,\dots,u_l,s_1,\dots,s_l$ are chosen according to
Lemma~\ref{lemma31}, taking $\eta:=3\delta$ there.

We use the notation $\ann(r,R):=\{z\in\C:r<|z|<R\}$ for an annulus
with radii $r$ and~$R$.
 Given a constant $M>1$ we have $M\rho(r)\leq r/[\log
T(r,f)]^2$ for large $r$ and thus
\begin{equation}\label{p4xy}
D(b,M\rho(r))\subset \ann(r,R_4(r)) \quad\text{for} \ b\in A(r).
\end{equation}
Hence 
\[
D(b,\rho(r))\subset \ann(r,R_4(r))\setminus
\bigcup_{j=1}^l D(u_j,s_j)
\]
and thus
\begin{equation}\label{3e1}
\left|\frac{f'(z)}{f(z)}\right|
\leq \frac{T(r,f)^{1+3\delta}}{r}
\quad\text{if}\
b\in B(r) \ \text{and}\
z\in D(b,\rho(r))
\end{equation}
by~\eqref{3e}.
In particular, $f$ has no zeros in $D(b,\rho(r))$.

We want to show that the area of $B(r)$ is not much smaller
than that of $A(r)$.
In order to do so we note that~\eqref{3d1}, \eqref{3j1} and
the definition of $\rho(r)$
yield
\[
\begin{aligned}
\area \left(\bigcup_{j=1}^l D\left(u_j,s_j+\rho(r)\right)\right)
&=\pi\sum_{j=1}^l\left(s_j+\rho(r)\right)^2\\
&\leq 2\pi\sum_{j=1}^l \left(s_j^2+\rho(r)^2\right)\\
&\leq 2\pi \frac{r^2}{ T(r,f)^{3\delta}}
+2\pi \rho(r)^2 n(R_5(r),0)\\
&\leq 2\pi\frac{r^2}{ T(r,f)^{3\delta}}
+4\pi e \frac{r^2[\log T(r,f)]^2}{T(r,f)^{1-4\delta}}\\
&\leq \frac{r^2}{ T(r,f)^{2\delta}}
\end{aligned}
\]
for large~$r$, provided $\delta<1/6$.
 Combining this with~\eqref{4b} we obtain
\begin{equation}\label{4e}
\area B(r)
\geq \area A(r)-
\area \left(\bigcup_{j=1}^l D\left(u_j,s_j+\rho(r)\right)\right)
\geq \frac{r^2}{ T(r,f)^{2\delta}}
\end{equation}
for large~$r$.

Now let $m(r)$ be the maximal number of pairwise disjoint
disks of radius $\rho(r)$ whose centers are in $B(r)$.
Then $B(r)$ is contained in a union of $m(r)$ disks
of radius $2\rho(r)$ and thus
\[
\area B(r)\leq 4\pi m(r) \rho(r)^2=4\pi m(r)\frac{r^2}{T(r,f)^{2-4\delta}}.
\]
for large~$r$. Together with~\eqref{4e} we obtain
\begin{equation}\label{4g}
m(r)\geq \frac{1}{4\pi} T(r,f)^{2-6\delta}
\geq T(r,f)^{2-7\delta}.
\end{equation}

Recall that if $b\in B(r)$, for some large $r\notin E$, then
$f$ has no zeros in $D(b,\rho(r))$.
Thus we can define a branch $\phi$
of the logarithm of $f$ in $D(b,\rho(r))$;
that is, there exists a holomorphic function
$\phi:D(b,\rho(r))\to \C$ such that $\exp \phi(z)=f(z)$ for
$z\in D(b,\rho(r))$.
Of course, the other branches of the logarithm of $f$ are then given
by $z\mapsto \phi(z)+2\pi i n$ where $n\in \Z$.

For $a\in\R$ we will consider the domain
\[
Q(a):=\{z\in\C: |\re z-a|< 1,|\im z|< 2\pi\}
\]
\begin{lemma}\label{lemma33}
Let $b\in B(r)$, with $r\notin E$ sufficiently large.
Then there exists a branch $\phi$ of the logarithm of $f$
defined in $D(b,\rho(r))$ and a subdomain $U$ of $D(b,\rho(r))$
such that
$\phi$ maps $U$ bijectively onto $Q(\log|f(b)|)$.
\end{lemma}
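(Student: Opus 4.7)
I would deduce this from the finite-islands result stated in Lemma~\ref{ahlfors}, applied to a suitable rescaling of $\phi$. Since $b\in B(r)$ guarantees that $f$ has no zero in $D(b,\rho(r))$, some holomorphic branch $\phi_0$ of $\log f$ exists there. Define $h:\D\to\C$ by $h(w):=\phi_0(b+\rho(r)w)-\phi_0(b)$, so that $h(0)=0$ and $h'(0)=\rho(r)\phi_0'(b)$.

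The membership $b\in A(r)$ yields $|\phi_0'(b)|=|f'(b)/f(b)|\ge T(r,f)^{1-\delta}/r$, and combined with $\rho(r)=r/T(r,f)^{1-2\delta}$ this gives
\[
\frac{|h'(0)|}{1+|h(0)|^2}=\rho(r)|\phi_0'(b)|\ge T(r,f)^{\delta},
\]
which exceeds any prescribed $\mu>0$ once $r$ is sufficiently large. I then apply Lemma~\ref{ahlfors} to the three pairwise disjoint Jordan domains $\tilde Q_k:=Q(\log|f(b)|)-\phi_0(b)+6\pi ik$, $k\in\{-1,0,1\}$. (Each $\tilde Q_k$ is a rectangle of vertical extent $4\pi$ and successive rectangles are centred $6\pi$ apart, so their closures are indeed pairwise disjoint.) The lemma produces a subdomain $W\subset\D$ and an index $k_0\in\{-1,0,1\}$ such that $h$ maps $W$ bijectively onto $\tilde Q_{k_0}$. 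Setting $U:=b+\rho(r)W\subset D(b,\rho(r))$, this translates back to $\phi_0(U)=Q(\log|f(b)|)+6\pi ik_0$; replacing $\phi_0$ by the branch $\phi:=\phi_0-6\pi ik_0$ of $\log f$ (which is admissible since $6\pi ik_0\in 2\pi i\Z$) then yields $\phi(U)=Q(\log|f(b)|)$ with $\phi|_U$ a bijection, as required.

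The essential ingredient is the lower bound $\rho(r)|\phi_0'(b)|\ge T(r,f)^{\delta}\to\infty$, which is exactly what the construction of $B(r)$---through the choice of $\rho(r)$ and the lower bound on $|f'(b)/f(b)|$ built into $A(r)$---was engineered to provide; verifying this quantitative estimate is the only real step. The main conceptual advantage of invoking the Ahlfors-islands theorem here rather than trying a direct Koebe-type univalence argument is that islands are by construction simply-connected subdomains on which $h$ is already bijective, so the possible presence of critical points of $f$ inside $D(b,\rho(r))$ need not be addressed separately.
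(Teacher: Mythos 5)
The strategy is the same as the paper's — rescale $\phi_0$ to a map $h$ of the unit disk, check that the spherical derivative at $0$ is large via the defining properties of $\rho(r)$ and $A(r)$, invoke the Ahlfors islands Lemma~\ref{ahlfors}, and then shift the branch of $\log f$ by an element of $2\pi i\Z$ to land on $Q(\log|f(b)|)$. There is, however, a genuine gap in the application of Lemma~\ref{ahlfors}. The constant $\mu$ in that lemma depends on the three Jordan domains $D_1,D_2,D_3$; your target domains $\tilde Q_k=Q(\log|f(b)|)-\phi_0(b)+6\pi ik$ depend on $b$ (through $\phi_0(b)$, whose imaginary part moves them up or down by an amount you do not control), so you are implicitly invoking a single $\mu$ that works uniformly for a $b$-dependent family of triples. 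This is not what Lemma~\ref{ahlfors} as stated provides. You write ``some holomorphic branch $\phi_0$'' with no normalization; without choosing, say, $\im\phi_0(b)\in(-\pi,\pi]$, the triple of rectangles can be translated arbitrarily far from the origin, and the threshold $\mu$ then degenerates.

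Even with the natural normalization $\im\phi_0(b)\in(-\pi,\pi]$ there is a secondary obstruction: the standard fix is to replace each $\tilde Q_k$ by a fixed superdomain $D_k^*$ containing $\tilde Q_k$ for all admissible $\im\phi_0(b)$ (an island over $D_k^*$ restricts to an island over $\tilde Q_k$), but a $D_k^*$ containing all vertical translates of your height-$4\pi$ rectangle by $\im\phi_0(b)\in[-\pi,\pi]$ must have height at least $6\pi$, and with your centers spaced only $6\pi$ apart these superdomains' closures would touch. So the $6\pi$ spacing is too tight for that repair. The paper sidesteps both issues at once by applying the islands lemma to fixed, $b$-independent domains $D_k=\{|\re z|<1,\,|\im z-12k\pi|<4\pi\}$, $k\in\{1,2,3\}$ — of \emph{double} the vertical extent — obtaining an island over some $D_k$, and only then choosing $n\in\Z$ with $|2\pi n-\im\phi_0(b)-12k\pi|\le\pi$ so that the rectangle $\{|\re z-\log|f(b)||<1,\,|\im z-2\pi n|<2\pi\}$ sits inside $D_k+\phi_0(b)$; its preimage inside the island is the desired $U$. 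If you widen your spacing to $\ge 12\pi$ and use $b$-independent rectangles of height $8\pi$, your argument collapses to the paper's.
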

\begin{proof}
Let $\phi_0$ be a fixed branch of the logarithm of $f$ defined in
$D(b,\rho(r))$. Define $h:\D\to \C$,
$h(z)=\phi_0(b+\rho(r)z)-\phi_0(b)$. Then $h(0)=0$ and
\[
|h'(0)|=\rho(r)|\phi_0'(b)|=\rho(r)\left|\frac{f'(b)}{f(b)}\right|
\geq \rho(r)\frac{T(r,f)^{1-\delta}}{r} = T(r,f)^{\delta}
\]
by the definition of $\rho(r)$ and
$A(r)$ and since $B(r)\subset A(r)$.
For $k\in\{1,2,3\}$ we put
$D_k:=\{z\in\C:|\re z|<1,|\im z -12k\pi |<4\pi\}$.
Lemma~\ref{ahlfors} now implies that if $r\notin E$ is large
enough, then there exists a subdomain $V$ of $\D$ and
$k\in\{1,2,3\}$ such that $h$ maps $V$ bijectively
onto $D_k$.
This implies that $D(b,\rho(r))$ contains a subdomain $W$
which is mapped bijectively onto
$$\{z\in\C: \left|\re z-\log|f(b)|\right|< 1,
|\im z-\im \phi_0(b)-12\pi k|< 4\pi\}$$ by $\phi_0$.
 Choosing $n\in \Z$ 
such that $|2\pi n-\im \phi_0(b) -12k\pi |\leq \pi$ we find
that there exists a domain $U\subset W\subset D(b,\rho(r))$ which is
mapped bijectively onto
$$\{z\in\C: \left|\re z-\log|f(b)|\right|< 1,|\im z-2\pi n|< 2\pi\}$$
by $\phi_0$.
The branch $\phi$ of the logarithm given by $\phi(z)=\phi_0(z)-2\pi i n$
now has the required property.
\end{proof}
We note that if $U$ is as in Lemma~\ref{lemma33}, then
\begin{equation}\label{4f}
f(U)=\exp Q(\log|f(b)|)=\ann\left(\frac{1}{e}|f(b)|,e|f(b)|\right).
\end{equation}

Now we fix some large $r_0\notin E$, put $\rho_0:=\rho(r_0)$
and choose $b_0\in B(r_0)$.
Lemma~\ref{lemma33} yields a branch $\phi_0$ of the logarithm
of $f$ and a domain $U_0\subset D(b_0,\rho_0)$ which is mapped
bijectively onto $Q(\log|f(b_0)|)$ by $\phi_0$.
Since $|f(b_0)|\geq \sqrt{M(r_0,f)}$
we also have 
$|f(b_0)|\geq 2r_0$
and the interval $\left[|f(b_0)|,2|f(b_0)|\right]$ contains
a point $r_1$ which does not belong to~$E$,
provided $r_0$ is chosen large enough.
Note that $r_1\geq 2r_0$.
We now choose $b_1\in B(r_1)$ and
with $\rho_1:=\rho(r_1)$
we find a domain $U_1\subset D(b_1,\rho_1)$
which is mapped
bijectively onto $Q(\log|f(b_1)|)$ by a branch $\phi_1$ of the logarithm
of $f$.

Inductively we thus obtain sequences $(r_k)$, $(\rho_k)$, $(b_k)$,
$(U_k)$ and $(\phi_k)$ satisfying
 $$r_{k}\in
\left[\,\left|f(b_{k-1})|,2|f(b_{k-1})\right|\,\right]\setminus E
 \subset [2r_{k-1},\infty)\setminus E,$$
 $\rho_k=\rho(r_k)$ and $b_k\in B(r_k)$
such that $U_k$ is a subdomain of $D(b_k,\rho_k)$  and $\phi_k$ is a
branch of the logarithm of $f$ which has the property that
 $\phi_k:U_k\to Q(\log|f(b_k)|)$
is bijective.

For large $r_0$ we have
$R_4(r_k)\leq 5 r_k/4 \leq 5|f(b_k)|/2$
for all $k$ and thus
\begin{equation} \label{A54}
\begin{aligned} 
D(b_k,M \rho_k)
&\subset 
\ann\left(r_k,R_4(r_k)\right)
\\ &
\subset
\ann\left(|f(b_{k-1})|,\frac52 |f(b_{k-1})|\right)
\subset 
\exp Q(\log|f(b_{k-1})|)
\end{aligned}
\end{equation}
by~\eqref{p4xy} and~\eqref{4f}.
Hence there exists a branch $L$ of the logarithm
which maps $D(b_k,M \rho_k)$ into $Q(\log |f(b_{k-1})|)$. 
Then $\psi_k:= \phi_{k-1}^{-1}\circ L$ 
is a branch of the inverse function of $f$ which maps
$D(b_k,M\rho_k)$ into $U_{k-1}$.

We put
\[
V_k:=\left(\psi_1\circ\psi_2\circ\ldots\circ\psi_k\right)\left(
\overline{D}(b_k,\rho_k)\right).
\]
Then $V_k$ is compact and $V_{k+1}\subset V_k$. Thus
$\bigcap_{k=1}^\infty V_k\neq\emptyset$. We will show that this
intersection contains only one point.

 In order to do so we note that since
$\psi_k:D(b_k,M\rho_k)\to U_{k-1}$ is univalent and $U_{k-1}\subset
D(b_{k-1},\rho_{k-1}) \subset D(\psi_k(b_k),2\rho_{k-1})$,
Koebe's one quarter
theorem implies that $2\rho_{k-1}\geq M\rho_k|\psi_k'(b_k)|/4$ and
thus $|\psi_k'(b_k)|\leq 8\rho_{k-1}/(M\rho_k)$. 
The Koebe distortion theorem, applied with $\lambda=1/M$,
now yields
$$
\sup_{z\in \overline{D}(b_k,\rho_k)}|\psi_k'(z)| 
\leq 
\frac{1+\lambda}{(1-\lambda)^3} |\psi_k'(b_k)|
=\frac{M^2(M+1)}{(M-1)^3} |\psi_k'(b_k)|  
\leq \frac{8M(M+1)}{(M-1)^3} \frac{\rho_{k-1}}{\rho_k}.
$$
Choosing $M=20$ we obtain
$$
\sup_{z\in \overline{D}(b_k,\rho_k)}|\psi_k'(z)| 
\leq \frac12 \frac{\rho_{k-1}}{\rho_k}.
$$
If $K\subset \overline{D}(b_k,\rho_k)$ is compact, we thus have
$$
\diam \psi_k(K)
\leq \frac12 \frac{\rho_{k-1}}{\rho_k}
\diam K,
$$
where $\diam K$ denotes the diameter of~$K$.
Hence
$$
\frac{\diam \psi_k(K)}{\rho_{k-1}}\leq \frac12 \frac{\diam
K}{\rho_k}.
$$
Inductively we obtain
$$
 \frac{\diam V_k}{\rho_0}
 =\frac{\diam 
\left( \psi_1\circ\psi_2\circ\ldots\circ\psi_k\right)
\left(\overline{D}(b_k,\rho_k)\right)}{\rho_{0}}
\leq\frac{1}{2^k} \frac{\diam \overline{D}(b_k,\rho_k)}{\rho_k}
= \frac{1}{2^{k-1}}
$$
and thus
\begin{equation}\label{4k}
\lim_{k\to\infty} \diam V_k = 0
\end{equation}
so that
$$
\bigcap_{k=1}^\infty V_k=\{z_0\}
$$
 for some~$z_0$.

It follows from the definition of $V_k$ and~\eqref{p4xy} that
$$
f^k(V_k)=\overline{D}(b_k,\rho_k)\subset \ann\left(r_k,R_4(r_k)\right)
$$
and hence that $z_0\in I(f)$.
Moreover,
$$
\begin{aligned}
f^{k+1}(V_k) 
&= f\left(\overline{D}(b_k,\rho_k)\right)
\supset f(U_k) \\
&=
\ann\left(\frac{1}{e}|f(b_k)|, e|f(b_k)| \right) 
\supset 
\ann\left(\frac{1}{e}r_{k+1},\frac{e}{2} r_{k+1} \right)  .
\end{aligned}
$$
As $F(f)$ does not have multiply connected components,
$$
\ann\left(\frac{1}{e}r_{k+1},\frac{e}{2} r_{k+1} \right)  
\cap
J(f)\neq\emptyset
$$
 for large~$k$. Since $J(f)$ is completely invariant, we
conclude that $V_k$ intersects $J(f)$, and
since $J(f)$ is  closed, this yields that $z_0\in I(f)\cap
J(f)$.

In order to estimate the upper box dimension of $I(f)
\cap J(f)$, we note that in
the above construction of the sequences $(r_k)$, $(\rho_k)$,
$(b_k)$, $(U_k)$ and $(\phi_k)$ we have $m(r_k)$ choices
for the point $b_k\in B(r_k)$ such that the disks of radius
$\rho_k$ around these points are pairwise disjoint, with $m(r_k)$
satisfying~\eqref{4g}.
We fix $k\in\N$, choose 
$r_j$, $\rho_j$, $b_j$, $U_j$ and $\phi_j$ for 
$0\leq j\leq k-1$  as before
and denote by
$b_k^\nu$
 choices of $b_k$ with the
above property,
with $1\leq \nu\leq m_k:=m(r_k)$

In other words, we take
$r_k\in [|f(b_{k-1})|,2|f(b_{k-1})|]\setminus E$
and $\rho_k:=\rho(r_k)$ 
as before and choose $b_k^\nu\in B(r_k)$, where $1\leq \nu\leq m_k$,
such that 
\begin{equation}\label{pwd}
D(b_k^i,\rho_k)\cap D(b_k^j,\rho_k)=\emptyset
\quad\text{for}\ i\neq j.
\end{equation}
Then for $1\leq \nu\leq m_k$ there exists a
branch $\psi_k^\nu:D(b_k^\nu,2 \rho_k)\to U_{k-1}$ of the inverse 
function of $f$
that is of the form $\psi_k^\nu=\phi_{k-1}^{-1}\circ L_\nu$ for 
some branch  $L_\nu$ of the logarithm which maps $D(b_k^\nu,2 \rho_k)$
into $Q(\log|f(b_{k-1})|)$.
For $a\in \R$ we  put
\[
P(a):=
\left\{z\in\C:\, 0\leq \re z-a\leq  \log \frac52,\,
|\im z|\leq  \frac32 \pi\right\} .
\]
In view of~\eqref{A54}
we may actually assume that 
$L_\nu$ maps $D(b_k^\nu,2 \rho_k)$ into 
the compact subset 
$P(\log|f(b_{k-1})|)$
of $Q(\log|f(b_{k-1})|)$.
Since $\phi_{k-1}^{-1}$ is univalent in $Q(\log|f(b_{k-1})|)$
we thus conclude that
there exists an absolute constant $\alpha>1$ such that
\begin{equation}\label{alphaphi}
\frac{1}{\alpha}\leq 
\frac{|(\phi_{k-1}^{-1})'(\zeta)|}{|(\phi_{k-1}^{-1})'(z)|}
\leq \alpha
\quad\text{for}\ 
\zeta,z\in P(\log|f(b_{k-1})|).
\end{equation}
An explicit upper bound
 for $\alpha$ could be determined from the Koebe
distortion theorem, but we do not need  such an estimate.
Put 
\[
\Lambda_k^\nu:=\psi_1\circ\psi_2\circ \ldots\circ \psi_{k-1}\circ\psi_k^\nu
=\psi_1\circ\psi_2\circ \ldots\circ \psi_{k-1}\circ\phi_{k-1}^{-1}\circ L_\nu.
\]
Since $\psi_1\circ\psi_2\circ \ldots\circ \psi_{k-1}$ is univalent in
$D(b_{k-1},2\rho_{k-1})$, 
we deduce from~\eqref{alphaphi}  and the Koebe distortion theorem
that there exists $\beta>1$ such that
\begin{equation}\label{betaLambda}
\frac{1}{\beta}\leq 
\frac{|(\Lambda_k^\nu)'(b_k^\nu)|}{|(\Lambda_k^1)'(b_k^1)|}
\leq \beta
\quad\text{for}\ 
1\leq \nu\leq m_k.
\end{equation}

We put
\[
V_k^\nu:=\Lambda_k^\nu\left(\overline{D}(b_k^\nu,\rho_k)\right)
\quad\text{and}\quad
v_k^\nu:=\Lambda_k^\nu(b_k^\nu).
\]
As above we see that each $V_k^\nu$
contains a point of
$I(f)\cap J(f)$. 

Since $\Lambda_k^\nu$ is univalent in $D(b_k^\nu,2\rho_k)$ we
deduce fron the 
Koebe distortion theorem (with $\lambda=1/2$) that 
\begin{equation}\label{Vkl}
\overline{D}\left(v_k^\nu,\frac49 \sigma_k^\nu\right)
\subset 
V_k^\nu\subset \overline{D}\left(v_k^\nu,4 \sigma_k^\nu\right)
\end{equation}
where
\[
\sigma_k^\nu:=\left|(\Lambda_k^\nu)'(b_k^\nu)\right|\rho_k
=\frac{\rho_k}{\left|(f^k)'(v_k^\nu)\right|}.
\]
We put $\sigma_k:=\sigma_k^1$.
It follows from~\eqref{4k} and~\eqref{Vkl} that
$$\lim_{k\to\infty}\sigma_k=0.$$
Using~\eqref{betaLambda} 
and~\eqref{Vkl}
we obtain
\begin{equation}\label{sizeWkl}
\overline{D}\left(v_k^\nu,\frac{4}{9\beta} \sigma_k\right)
\subset 
V_k^\nu\subset \overline{D}\left(v_k^\nu,4\beta \sigma_k\right)
\end{equation}

Fix a square of sidelength $\sigma_k$
centered at a point~$c$ and denote by 
$N$ the cardinality of the set of all $\nu\in\{1,\dots,m_k\}$ for 
which $V_k^\nu$ intersects this square.
It follows from~\eqref{sizeWkl} that if 
if $V_k^\nu$ intersects  this square,
 then $V_k^\nu\subset D(c,(8\beta+1)\sigma_k)$.
On the other hand, \eqref{sizeWkl} also says
that $V_k^\nu$ contains a disk of 
radius $4\sigma_k/(9\beta)$. Since the 
$V_k^\nu$ have pairwise disjoint interior by~\eqref{pwd},
we obtain
\[
N\pi 
\left(\frac{4\sigma_k}{9\beta} \right)^2
\leq 
\pi 
\left( (8\beta+1)\sigma_k\right)^2.
\]
Thus 
$N\leq N_0:=\lfloor 81\beta^2(8\beta+1)^2/16\rfloor$.

We now put a grid of sidelength $\sigma_k$ over $U_0$.
Then each square of this grid can intersect at most 
$N_0$ of the $m_k$ domains $V_k^\nu$.
Recalling that each of the domains $V_k^\nu$ contains
a point of $I(f)\cap J(f)$
we see that at least $m_k/N_0$ squares of our grid 
intersect $I(f)\cap J(f)$.
We conclude that
\begin{equation}\label{dimmk}
\dimUB(U_0\cap I(f)\cap J(f))
\geq
\limsup_{k\to\infty}
\frac{\log (m_k/N_0)}{-\log \sigma_k}.
\end{equation}
By~\eqref{4g} we have 
\begin{equation}\label{boundmk}
m_k=m(r_k)\geq T(r_k,f)^{2-7\delta}.
\end{equation}
It remains to estimate~$\sigma_k$.
In order to 
do so
we note that
\begin{equation}\label{4l}
(f^k)'(v_k^1)=\prod_{j=0}^{k-1}f'(f^j(v_k^1)).
\end{equation}
Since
\[
f^j(v_k^1)\subset U_j\subset D(b_j,\rho_j)
\]
it follows from~\eqref{3e1}  that
\[
\left|\frac{f'(f^j(v_k^1))}{f(f^j(v_k^1))}\right|
\leq \frac{T(r_j,f)^{1+3\delta}}{r_j}.
\]
This yields
\begin{equation}\label{4m}
\left|
f'(f^j(v_k^1))\right|
\leq \frac{T(r_j,f)^{1+3\delta}}{r_j}\left|
f^{j+1}(v_k^1)\right|
\leq e \frac{T(r_j,f)^{1+3\delta}}{r_j} r_{j+1}.
\end{equation}
We deduce from~\eqref{4l} and ~\eqref{4m} that
\[
\left|(f^k)'(v_k^1)\right| \leq
 \prod_{j=0}^{k-1} 
e \frac{T(r_j,f)^{1+3\delta}}{r_j} r_{j+1}
=e^k  \frac{r_{k}}{r_0}
\left(\prod_{j=0}^{k-1} T(r_j,f)\right)^{1+3\delta}
\]
Using the definition of $\rho_k$ and $\sigma_k$ we thus have
\begin{equation}\label{4n}
\sigma_k\geq
\frac{r_0}{e^k T(r_k,f)^{1-2\delta} \left( \prod_{j=0}^{k-1}
T(r_j,f)\right)^{1+3\delta}}
\end{equation}
By construction, we have
\[
\log r_{j+1}\geq \log|f(b_j)|\geq \frac12\log M(r_j,f)
\]
for $0\leq j\leq k-1$.
Given a large positive number $q$, we deduce from~\eqref{1a} that
if $r_0$ is sufficiently large, then
\[
\log M(r_{j+1},f)\geq \left(2\log r_{j+1}\right)^{q+1}
\]
for $0\leq j\leq k-1$.
Combining the last two estimates with~\eqref{3b1}
we find that
\[
T(r_j,f)\leq \log M(r_j,f)\leq 2\log r_{j+1}
\leq (\log M(r_{j+1},f))^{1/(q+1)}
\leq T(r_{j+1},f)^{1/q}
\]
for large $r_0$.
We conclude that
\begin{equation}\label{4o}
\prod_{j=0}^{k-1} T(r_j,f)
\leq T(r_k,f)^{\tau},
\end{equation}
where
\[
\tau:=\sum_{j=1}^k\left(\frac{1}{q}\right)^j
\leq \sum_{j=1}^\infty \left(\frac{1}{q}\right)^j
=\frac{1}{q-1}.
\]
For large $q$ we have $\tau(1+3\delta)\leq\delta$ and
thus
\[
\left( \prod_{j=0}^{k-1}
T(r_j,f)\right)^{1+3\delta}
\leq T(r_k,f)^\delta.
\]
We can also deduce from~\eqref{4o} that
\[
T(r_k,f)^\delta
\geq e^k
\]
if $r_0$ is chosen large enough. Combining the last two estimates
with~\eqref{4n}, and assuming that $r_0\geq 1$, we conclude that
 $\sigma_k\geq 1/T(r_k,f)$.

Together with~\eqref{dimmk} and~\eqref{boundmk} we thus find
that
\[
\dimUB(U_0\cap I(f)\cap J(f))
\geq\limsup_{k\to\infty}\frac{(2-7\delta)\log T(r_k,f)-\log N_0}
{\log T(r_k,f)}
=2-7\delta.
\]
Since $\delta>0$ was arbitrary, we obtain~\eqref{1a2} for $U=U_0$.
We may assume that the closure of
$U_0$ does not intersect the exceptional set.
As mentioned in the introduction, Theorem~\ref{thm1} now follows.

\begin{remark}
Theorem~B has been extended to meromorphic functions with finitely
many poles~\cite{Rippon06} and in fact to meromorphic functions with a
logarithmic tract~\cite[Theorem~1.4]{BRS1}. It is conceivable that
our result admits similar extensions.
\end{remark}

\section{The minimum modulus of entire functions with multiply connected
Fatou components}
Zheng~\cite{Zheng06} proved that if  the Fatou set of
a transcendental entire
function $f$ has a multiply connected component~$U$,
then there exist sequences $(r_k)$ and $(R_k)$
satisfying
$\lim_{k\to\infty}r_k=\lim_{k\to\infty}R_k/r_k= \infty$
such that $\ann(r_k,R_k)\subset f^k(U)$
 for large $k$.
It is shown in~\cite{BRS} that
one can actually take $R_k=r_k^c$ for some $c>1$ and this is then
used to show that if $F(f)$ has a multiply connected component,
then there exists $C >0$ such that
\begin{equation}\label{5x}
\log L(r,f)\geq \left(1-\frac{C}{\log r}\right)\log M(r,f)
\end{equation}
on some unbounded sequence of $r$-values.

Using Zheng's result~\cite{Zheng06} instead of~\cite{BRS}  yields
the following proposition referred to in the introduction.
 We include its short proof for completeness.

\begin{proposition}\label{zhengmod}
Let $f$ be a transcendental entire function for which $F(f)$
has a multiply connected component. Then~\eqref{1y} holds.
\end{proposition}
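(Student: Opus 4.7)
The plan is to invoke Zheng's main theorem directly. Since $F(f)$ has a multiply connected component~$U$, that theorem furnishes sequences $r_k\to\infty$ and $R_k$ with $R_k/r_k\to\infty$ such that $\ann(r_k,R_k)\subset f^k(U)$ for all large~$k$. From this I aim to show that $\log L(\rho,f)/\log M(\rho,f)\to 1$ along a suitable sequence $\rho\to\infty$; a natural choice is the geometric midpoint $\rho_k:=\sqrt{r_kR_k}$, which lies well inside the Zheng annulus because $R_k/r_k\to\infty$.

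The first step is to observe that, for $k$ large, $f$ has no zeros on $\ann(r_k,R_k)$: this annulus lies in the Fatou set, and a zero in a multiply connected component would pin the forward orbit of~$0$ inside that component, contradicting the fact that iterates escape on~$f^k(U)$ for large~$k$. (If this is not immediate, one may restrict to a slightly shrunk sub-annulus.) On the zero-free annulus I would factor $f(z)=z^{n_k}\exp g_k(z)$, where $n_k$ is the winding number of $f$ around~$0$ on the annulus and $g_k$ is single-valued holomorphic. The key quantity
\[
\log M(\rho_k,f)-\log L(\rho_k,f)=\max_{|z|=\rho_k}\re g_k(z)-\min_{|z|=\rho_k}\re g_k(z)
\]
is then bounded via the Laurent expansion of $g_k$ on the annulus: Cauchy estimates on the coefficients (using the outer boundary for positive indices and the inner one for negative) combined with $\rho_k=\sqrt{r_kR_k}$ yield an oscillation bound of the form $O\!\bigl(\sqrt{r_k/R_k}\cdot B_k\bigr)$, where $B_k$ majorizes $|g_k|$ on the two boundary circles. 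Since $\sqrt{r_k/R_k}\to 0$ while $B_k$ and $\log M(\rho_k,f)$ are of the same order of magnitude (both comparable to $\log M(R_k,f)$), this oscillation is $o(\log M(\rho_k,f))$, which forces $\log L(\rho_k,f)/\log M(\rho_k,f)\to 1$ and establishes~\eqref{1y}.

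The main obstacle is bounding $|g_k|$, and not merely $|\re g_k|$, on the boundary circles. The real part $\re g_k=\log|f|-n_k\log|z|$ is directly controlled by $\log M$ and~$\log L$, but the imaginary part requires a Borel--Carathéodory-type estimate adapted to the annulus. An attractive alternative is to work throughout with the harmonic function $\log|f|-n_k\log|z|$ itself and apply a Harnack-type bound for harmonic functions on very thick annuli; this should yield the same oscillation estimate and bypass the need to control $|\im g_k|$ explicitly. Either route exploits precisely the quantitative content of Zheng's conclusion, namely that $R_k/r_k\to\infty$.
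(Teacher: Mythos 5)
Your structural idea is sound: use Zheng's thick annuli $\ann(r_k,R_k)\subset f^k(U)$ with $R_k/r_k\to\infty$, and compare $\log L$ and $\log M$ on the central circle $|z|=\sqrt{r_kR_k}$. However, the paper's actual proof is by hyperbolic geometry, which you do not touch at all: with $a_k,b_k$ the points on $|z|=\sqrt{r_kR_k}$ realizing the minimum and maximum modulus, the monotonicity of the hyperbolic metric together with $R_k/r_k\to\infty$ gives $\lambda_{\ann(r_k,R_k)}(a_k,b_k)\to 0$, the Schwarz--Pick lemma pushes this forward to $\lambda_{U_{k+1}}(f(a_k),f(b_k))\to 0$, and the standard lower bound $\rho_{\C\setminus\{0,1\}}(w)\geq c/(|w|\log|w|)$ then converts smallness of the hyperbolic distance into $\log\bigl(\log M(s_k,f)/\log L(s_k,f)\bigr)\to 0$. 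That is a short, self-contained argument needing no factorization of $f$ at all.

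Your primary route (factor $f=z^{n_k}\exp g_k$ and use Laurent/Cauchy estimates) has the gap you yourself flag, and it is a real one: Cauchy estimates bound the coefficients in terms of $\sup|g_k|$, not $\sup|\re g_k|$, and passing from $\re g_k$ to $|g_k|$ requires a Borel--Carath\'eodory inequality on the annulus, which you do not supply. Your fallback suggestion --- Harnack on a harmonic function over a very thick annulus --- is essentially what the paper's remark does, but your formulation is off in a detail that matters. You propose to work with $\log|f|-n_k\log|z|$ ``to make it harmonic,'' but $\log|f|$ is already harmonic where $f\neq 0$; the subtraction is unnecessary, and worse, it destroys the positivity that Harnack's inequality requires (for large $|z|$ and moderate $|f|$ the quantity $\log|f|-n_k\log|z|$ can be negative, and bounding $n_k\log R_k$ by $\log M(R_k,f)$ is delicate when Zheng's theorem only guarantees $R_k/r_k\to\infty$, not a power-law gap). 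The paper's remark instead pulls back $\log|f|$ itself to the strip $\{|\re z|<\log\sqrt{R_k/r_k}\}$ via $z\mapsto s_k e^z$; since one may assume $|f|>1$ on $\ann(r_k,R_k)$, this gives a genuinely positive harmonic function, and Harnack on the wide strip immediately yields $\log M(s_k,f)/\log L(s_k,f)\to 1$. So: right shape, but both of your routes as written have unresolved gaps, and neither is the route the paper actually takes in the proof of the proposition.
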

\begin{proof}
Zheng~\cite[Corollary~1]{Zheng06} used hyperbolic geometry to prove
that the hypothesis of the proposition implies~\eqref{1z}.
 We will use the same idea and denote the hyperbolic
 distance of two points $a,b$ in a hyperbolic domain $V$ by
 $\lambda_V(a,b)$; see, e.g., \cite[Section 2.2]{McM94} for
the properties of the hyperbolic metric that are used.

Let $U$ be a multiply connected component of $F(f)$ and let $(r_k)$
and $(R_k)$ be as in Zheng's result mentioned above.
  Put $s_k=\sqrt{R_kr_k}$ and $U_k=f^k(U)$.
Choose $|a_k|=|b_k|=s_k$ such that
$|f(a_k)|=L(s_k,f)$ and $|f(b_k)|=M(s_k,f)$. Since
$R_k/r_k\to\infty$ we easily see that $\lambda_{U_k}(a_k,b_k)\leq
\lambda_{\ann(r_k,R_k)}(a_k,b_k)\to 0$. Thus
$\lambda_{U_{k+1}}(f(a_k),f(b_k))\to 0$. Since $0,1\notin U_{k+1}$
for large $k$
 we obtain
\begin{equation}\label{5a}
\lambda_{\C\setminus\{0,1\}}(f(a_k),f(b_k))\to 0
\end{equation}
as $k\to\infty$. As the
 density $\rho_{\C\setminus\{0,1\}}(z)$ of the hyperbolic metric in
$\C\setminus\{0,1\}$ satisfies $\rho_{\C\setminus\{0,1\}}(z)\geq
 c/(|z|\log|z|)$ for some $c>0$ and large~$|z|$,
 we obtain
\begin{equation}\label{5b}
\lambda_{\C\setminus\{0,1\}}(f(a_k),f(b_k))
\geq c\int_{|f(a_k)|}^{|f(b_k)|}\frac{dt}{t\log t}
 =c\log\frac{\log M(s_k,f)}{\log L(s_k,f)}.
\end{equation}
Now~\eqref{1y} follows from~\eqref{5a} and~\eqref{5b}.
\end{proof}

\begin{remark}
An alternative way to deduce Proposition~\ref{zhengmod} from Zheng's
result is via Harnack's inequality~\cite[p.~14]{Ransford}.
 This method was used by Hinkkanen~\cite[Lemma~2]{Hinkkanen94}
 and Rippon and Stallard~\cite[Lemma~5]{RS-slowescape},
 and it is also used in~\cite{BRS}.

With the notation as in the above proof, put $t_k=
\log s_k=\log \sqrt{R_k/r_k}$ and define
$$u_k:\{z\in\C: |\re z|<t_k\}\to\R, \quad u_k(z)=\log|f(s_ke^z)|.
$$
We may assume that $|f(z)|>1$ for $z\in \ann(r_k,R_k)\subset f^k(U)$
so that $u_k$ is a positive harmonic function.
 Choose $y_1,y_2\in\R$ with
$|y_1-y_2|\leq\pi$ such that $u(iy_1)=\log L(s_k,f)$ and
$u(iy_2)=\log L(s_k,f)$.
 By Harnack's inequality we have
$$
u(iy_2)\leq \frac{t_k+\pi}{t_k-\pi} u(iy_1)=(1+o(1))u(iy_1)
$$
as $k\to\infty$, and~\eqref{1y} follows.
\end{remark}
\begin{remark}\label{rem2}
It follows from a result of  Fenton (\cite{Fenton80}, see
also~\cite{Chyz}) that if
\begin{equation}\label{6a}
p:=\liminf_{r\to\infty}\frac{\log\log M(r,f)}{\log\log r}<\infty
\end{equation}
and $\eps>0$, then
\begin{equation}\label{6x}
\log L(r,f)\geq \log M(r,f) -  (\log r)^{p-2+\eps}
\end{equation}
on some sequence of $r$-values tending to $\infty$. Hence
\begin{equation}\label{6c}
\log L(r,f)\geq \log M(r,f) - \frac{\log M(r,f)}{(\log r)^{2-2\eps}}
\end{equation}
on such a sequence. Choosing $\eps<1/2$ we see that if
\begin{equation}\label{6b}
\lim_{r\to\infty}\left(1-\frac{\log L(r,f)}{\log M(r,f)}\right)\log
r= \infty,
\end{equation}
 then~\eqref{6c} and hence~\eqref{6a} cannot hold and thus~\eqref{1a} holds.
The result of~\cite{BRS} quoted before Proposition~\ref{zhengmod}
shows that if~\eqref{6b} holds, then $F(f)$ has no multiply
connected components.
Thus we obtain the following consequence of Theorem~\ref{thm1}.
\begin{cor}\label{cor3}
Let $f$ be a transcendental entire function satisfying~\eqref{6b}.
Then  $\dimP(I(f)\cap J(f))=2$.
\end{cor}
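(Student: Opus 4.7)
The plan is to deduce the corollary directly from Theorem~\ref{thm1} by verifying its two hypotheses, namely the lower growth condition~\eqref{1a} and the absence of multiply connected Fatou components. The key observation is that both follow from~\eqref{6b} via results already cited in the discussion preceding the statement; the proof is essentially a short logical reduction and does not require any new technical estimates.

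First I would check that~\eqref{6b} forces~\eqref{1a}. I would argue by contradiction: suppose that the quantity
\[
p:=\liminf_{r\to\infty}\frac{\log\log M(r,f)}{\log\log r}
\]
is finite. Then Fenton's theorem, in the form~\eqref{6x}, provides an unbounded sequence $(r_n)$ along which
\[
\log L(r_n,f)\geq \log M(r_n,f)-(\log r_n)^{p-2+\eps}
\]
for any prescribed $\eps>0$. Dividing by $\log M(r_n,f)$ and using that along this sequence $\log M(r_n,f)$ is comparable to $(\log r_n)^{p}$ (so that the right-hand side can be rewritten as in~\eqref{6c}), one obtains
\[
\left(1-\frac{\log L(r_n,f)}{\log M(r_n,f)}\right)\log r_n\leq (\log r_n)^{-1+2\eps}.
\]
Choosing $\eps<1/2$, the right-hand side tends to $0$, contradicting~\eqref{6b}. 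Hence $p=\infty$, which is precisely~\eqref{1a}.

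Next I would invoke the result of Bergweiler, Rippon and Stallard quoted in Remark~\ref{rem2}: whenever $F(f)$ has a multiply connected component, the estimate~\eqref{5x} holds on some unbounded sequence, so that
\[
\left(1-\frac{\log L(r,f)}{\log M(r,f)}\right)\log r\leq C
\]
is bounded along that sequence. This is incompatible with~\eqref{6b}, so under the assumption of the corollary the Fatou set $F(f)$ cannot have any multiply connected component.

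With both hypotheses of Theorem~\ref{thm1} verified, the conclusion $\dimP(I(f)\cap J(f))=2$ follows immediately. The whole argument is thus just a quick bookkeeping exercise; the only slightly delicate point, which I would want to state carefully, is the deduction of~\eqref{6c} from~\eqref{6x}, since it uses that along the liminf sequence $\log M(r,f)$ is of order $(\log r)^{p+o(1)}$, so that a term of size $(\log r)^{p-2+\eps}$ is indeed bounded by $\log M(r,f)/(\log r)^{2-2\eps}$ for large $r$ in that sequence.
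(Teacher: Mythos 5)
Your proof is correct and follows essentially the same route as the paper's Remark~\ref{rem2}: arguing by contradiction via Fenton's theorem to obtain~\eqref{1a}, citing~\cite{BRS} to rule out multiply connected Fatou components, and then applying Theorem~\ref{thm1}. The only small imprecision is in your final paragraph: the lower bound $\log M(r,f)\geq(\log r)^{p-\eps}$ needed to pass from~\eqref{6x} to~\eqref{6c} holds for \emph{all} sufficiently large $r$ by the very definition of $p$ as a $\liminf$, which is in fact what is required, since the sequence furnished by Fenton's theorem need not coincide with a sequence along which the $\liminf$ is attained.
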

Finally we mention that it was actually shown
in~\cite{BRS} and~\cite{Fenton80} that~\eqref{5x}
and~\eqref{6x} hold on sets or $r$-values of a
certain size. This could be used to further strengthen
the statement of Corollary~\ref{cor3}.
\end{remark}


\begin{thebibliography}{99}
\bibitem{AndEid}
J.\ M.\ Anderson and V. Ya. Eiderman, 
Cauchy transforms of point masses: the logarithmic derivative of polynomials.
Ann.\ of Math.\ (2) 163 (2006), 1057–-1076. 

\bibitem{Baker84} I.\ N.\ Baker,
Wandering domains in the iteration of entire functions.
{\rm Proc.\ London Math.\ Soc.} (3) 49 (1984), 563--576.
\bibitem{Baranski08}
K.\ Bara\'nski, Hausdorff dimension of hairs and ends for entire maps
of finite order.  Math. Proc. Cambridge Philos. Soc.
145 (2008), 719--737.

\bibitem{BKZ}
K.\ Bara\~nski, B.\ Karpi\~nska and A.\ Zdunik,
Hyperbolic dimension of Julia sets of meromorphic maps with logarithmic tracts.
Int. Math. Res. Notices (2009) 2009, 615--624.

\bibitem{Bergweiler93}
W.\ Bergweiler, Iteration of meromorphic functions. {\rm Bull.\
Amer.\ Math.\ Soc.\ (N.\ S.)} 29 (1993), 151--188.

\bibitem{Bergweiler98}
W.\ Bergweiler,
 A new proof of the Ahlfors five islands theorem.
  J. Anal. Math. 76 (1998), 337--347.
\bibitem{Bergweiler95}
W.\ Bergweiler and A.\ Eremenko, On the singularities of the
inverse to a meromorphic function of finite order. {\rm Rev. Mat.
Iberoamericana} 11 (1995), 355--373.
\bibitem{BK}
W.\ Bergweiler and B.\ Karpi\'nska,
On the Hausdorff dimension of the Julia set of a regularly
growing entire function.
Math. Proc. Cambridge Philos. Soc. 148 (2010), 531--551.
\bibitem{BKS}
W.\ Bergweiler, B.\ Karpi\'nska and G.\ M.\ Stallard,
The growth rate of an entire function
 and the Hausdorff dimension of its Julia set.
J. London Math. Soc. (2) 80 (2009), 680--698.
\bibitem{BRS1}
W.\ Bergweiler, P.\ J.\  Rippon and G.\ M.\  Stallard, Dynamics of
meromorphic functions with direct or logarithmic singularities. {\rm
Proc.\ London Math.\ Soc.} (3) 97 (2008), 368--400.
\bibitem{BRS}
W.\ Bergweiler, P. J. Rippon and G. M. Stallard,
 Multiply connected wandering domains of entire functions. In
 preparation.

\bibitem{Cherry01}
W.\ Cherry and Z.\ Ye,  Nevanlinna's Theory of Value Distribution.
The Second Main Theorem and its Error Terms. Springer-Verlag,
Berlin, 2001.

\bibitem{Chyz}
I. E. Chyzhykov,  An addition to the $\cos\pi\rho$-theorem for
subharmonic and entire functions of zero lower order. Proc. Amer.
Math. Soc. 130 (2002), 517--528.

\bibitem{Eremenko89}
A.\ E.\ Eremenko, On the iteration of entire functions. In {\rm
``Dynamical Systems and Ergodic Theory''}. Banach Center Publications
23, Polish Scientific Publishers, Warsaw 1989, pp.\ 339--345.
\bibitem{Eremenko92}
A.\ E.\ Eremenko and M.\ Yu.\ Lyubich, Dynamical properties of some
classes of entire functions. {\rm Ann.\ Inst.\ Fourier} 42 (1992),
989--1020.
\bibitem{Falconer}
K.\ J.\ Falconer, Fractal Geometry. Mathematical Foundations and
Applications. John Wiley \& Sons, Chichester, 1990.

\bibitem{Fenton80}
P. C. Fenton, The infimum of small subharmonic functions. Proc.
Amer. Math. Soc. 78 (1980), 43--47.

\bibitem{Fuchs40}
A.\ J.\ Macintyre and W.\ H.\ J.\ Fuchs,
Inequalities for the logarithmic derivatives of a poly\-nomial.
J. London Math. Soc. 15 (1940), 162--168.
\bibitem{Goldberg08}
A.\ A.\ Goldberg and I.\ V.\ Ostrovskii,
Value Distribution of Meromorphic Functions.
Transl.\ Math.\ Monographs 236, American Math.\ Soc., Providence, R.~I., 2008.
\bibitem{Hayman64}
W.\ K.\ Hayman,
{\rm Meromorphic Functions}.
Clarendon Press, Oxford, 1964.
\bibitem{Hayman89}
W.\ K.\ Hayman, Subharmonic Functions, Vol. 2. London Math.\ Soc.\
Monographs 20, Academic Press, London, 1989.

\bibitem{Hinkkanen94}
A.\ Hinkkanen,
 Julia sets of polynomials are uniformly perfect.
Bull. London Math. Soc. 26 (1994), 153--159.

\bibitem{Langley95}
J.\ K.\ Langley, On the multiple points of certain meromorphic
functions. Proc. Amer. Math. Soc. 123 (1995), 355--373.

\bibitem{McMullen87}
C.\ McMullen, Area and Hausdorff dimension of Julia sets of entire
functions. Trans.\ Amer.\ Math.\ Soc.\ 300 (1987), 329--342.

\bibitem{McM94} C.~T.~McMullen,
{\rm  Complex Dynamics and Renormalization}.
 Ann.\ of Math.\ Studies 135,
 Princeton Univ.\ Press, Princeton, NJ, 1994.


\bibitem{Ransford}
T.\ Ransford, Potential Theory in the Complex Plane. London Math.
Soc. Stud. Texts 28. Cambridge University Press, Cambridge, 1995.

\bibitem{Rempe09}
L. Rempe,
Hyperbolic dimension and radial Julia sets of transcendental functions,
Proc. Amer. Math. Soc. 137 (2009), 1411--1420.

\bibitem{Rempe10}
L.\ Rempe and G.\ M.\ Stallard,
Hausdorff dimensions of escaping sets of transcendental entire functions.
Proc. Amer. Math. Soc. 138 (2010), 1657-–1665.

\bibitem{Rippon05a}
P.\ J.\ Rippon and G.\ M.\ Stallard,
  Escaping points of meromorphic
functions with a finite number of poles.
 J. Anal. Math. 96 (2005), 225--245.

\bibitem{Rippon05}
P.\ J.\ Rippon and G.\ M.\ Stallard, Dimensions of Julia sets of
meromorphic functions.
 J. London Math. Soc. (2) 71 (2005), 669--683.

\bibitem{Rippon06}
P.\ J.\ Rippon and G.\ M.\ Stallard, Dimensions of Julia sets of
meromorphic functions with finitely many poles. Ergodic Theory
Dynam. Systems 26 (2006), 525--538

\bibitem{RS-slowescape}
P.\ J.\ Rippon and G.\ M.\ Stallard,
 Slow escaping points of meromorphic functions. Preprint,
arXiv: 0812.2410.

\bibitem{Rippon10}
P.\ J.\ Rippon and G.\ M.\ Stallard,
Fast escaping points of entire functions.
Preprint,       arXiv: 1009.5081v1.


\bibitem{Schubert07}
H.\ Schubert, \"Uber die Hausdorff-Dimension der Juliamenge von
Funktionen endlicher Ordnung. Dissertation, University of Kiel,
2007.

\bibitem{Stallard90}
G.\ M.\ Stallard,
  The Hausdorff dimension of Julia sets of entire
functions.
 Ergodic Theory Dynam. Systems 11 (1991), 769--777.

\bibitem{Stallard08}
G.\ M.\ Stallard, Dimensions of Julia sets of transcendental
meromorphic functions.
 In ``Transcendental Dynamics and Complex
Analysis''. London Math.\ Soc.\ Lect.\ Note Ser.\ 348.
%Edited by P.\ J.\ Rippon and G.\ M.\ Stallard,
Cambridge Univ.\ Press, Cambridge, 2008, pp.~425--446.
\bibitem{Zheng06}
J.\ H.\ Zheng,
On multiply-connected Fatou components
in iteration of meromorphic functions.
J.~Math. Anal. Appl.
313 (2006), 24--37.
\end{thebibliography}
\end{document}